\documentclass[11pt]{article}

\usepackage[margin=1.12in]{geometry}
\usepackage{amsmath,amssymb,amsthm,mathtools}
\usepackage[round,authoryear]{natbib}
\usepackage[colorlinks,citecolor=blue,linkcolor=blue,urlcolor=blue]{hyperref}
\usepackage{enumitem}
\usepackage{booktabs}
\usepackage{microtype}
\usepackage[utf8]{inputenc}
\usepackage[T1]{fontenc}
\usepackage[osf,sc]{mathpazo}

\newtheorem{theorem}{Theorem}[section]
\newtheorem{proposition}[theorem]{Proposition}
\newtheorem{lemma}[theorem]{Lemma}
\newtheorem{corollary}[theorem]{Corollary}
\theoremstyle{definition}

\newtheorem{remark}[theorem]{Remark}

\newtheorem{assumption}[theorem]{Assumption}

\newcommand{\E}{\mathbb E}
\newcommand{\R}{\mathbb R}
\newcommand{\one}{\mathbf 1}
\newcommand{\Hd}{\{0,1\}^d}
\newcommand{\Hs}{\{0,1\}^s}
\newcommand{\cE}{\mathcal E}

\newcommand{\cM}{\mathcal M}
\newcommand{\cG}{\mathcal G}
\newcommand{\KL}{\mathrm{KL}}
\newcommand{\Var}{\mathrm{Var}}

\newcommand{\wh}{\widehat}
\newcommand{\wt}{\widetilde}
\newcommand{\norm}[1]{\left\lVert #1\right\rVert}

\title{Resolvable Fourier Degree and Minimax Estimation on the Boolean Hypercube}
\author{%
  G\'erard Biau\\[4pt]
  \small Sorbonne Universit\'e, Institut universitaire de France\\[2pt]
  \small \texttt{gerard.biau@sorbonne-universite.fr}
}

\date{}

\begin{document}
\maketitle

\begin{abstract}
\noindent We study high-dimensional nonparametric regression on the Boolean
hypercube \(\{0,1\}^d\), from a sample of size \(n\), under a Fourier--Sobolev budget, i.e., a 
bound \(B\) on the total
quadratic influence of the regression function. The central object is the
resolvable Fourier degree \(D^*(d,n)\), defined as the largest
interaction order \(D\) such that the dimension of the degree-\(D\)
Fourier--Walsh space does not exceed \(n\).  We prove that the
minimax risk over Fourier--Sobolev ellipsoids is of order
\(B/D^*(d,n)\): estimation is possible up to the statistically
visible interaction order, and the budget controls the residual mass
carried by higher-order interactions.  We then study monotone
regression under the same budget.  For monotone functions supported on a 
known set of \(s\) coordinates, with \(s\) of order \(\log n\), we obtain a matching minimax rate of
order \(B/s\). Finally, we consider an
unknown-support model in which the function is monotone and depends
on at most \(s\) coordinates; its minimax risk is governed by the
intrinsic dimension \(s\) together with the logarithmic cost of
selecting the active support.
\end{abstract}

\noindent\textbf{Keywords.} High-dimensional regression; Boolean hypercube;
Fourier--Walsh expansion; minimax estimation; monotone regression; intrinsic
dimension.

\section{Introduction}\label{sec:intro}

\subsection*{Context and motivation}

Binary covariates arise naturally in many statistical problems.  In clinical medicine, a patient may be described by the presence or absence of risk factors such as smoking, hypertension, diabetes, family history, or genetic mutations.  In credit scoring, an applicant may be represented by binary indicators recording previous default, employment status, home ownership, and other financial flags.  In pharmacogenomics, the response to a treatment may depend on the presence or absence of deleterious genetic variants. In such examples, monotonicity is a natural structural assumption, since the occurrence of an additional risk factor should not make the outcome less likely; see, for instance, \citet{ben-david1995,feelders2010}.

The present paper was motivated by this monotone-regression setting, but the main principle that emerges is more general.  We observe
\[
  Y_j=f(X_j)+\varepsilon_j,\qquad j=1,\ldots,n,
\]
where \(X_1,\ldots,X_n\) are independent and uniformly distributed on
\(\{0,1\}^d\). The variables \(\varepsilon_1,\ldots,\varepsilon_n\) are i.i.d., centered and sub-Gaussian with variance proxy \(\sigma^2\), and are independent of the design. Throughout the paper, the index \(j\) is used for observations, whereas the index \(i\) is reserved for coordinates in \([d]\). The loss is integrated squared error with respect to the uniform measure on the cube.  

Although \(\Hd\) is finite, its cardinality grows so rapidly with \(d\) that, already for moderate values of the dimension, direct exploration of the domain is statistically impossible. For instance,
\[
2^{100}\simeq 1.27\times 10^{30},
\qquad
2^{1000}\simeq 1.07\times 10^{301}.
\]
The latter number exceeds the usual estimate, of order \(10^{80}\), for the number of atoms in the observable universe. Even a sample of size one million therefore probes only an infinitesimal fraction of the domain.

Monotonicity alone does not remove this difficulty.  The number of Boolean monotone functions is the Dedekind number $M(d)$, and
\[
\log_2 M(d)\sim \binom{d}{\lfloor d/2\rfloor}
\sim \sqrt{\frac{2}{\pi d}}\,2^d.
\]
Thus even the class of Boolean-valued monotone functions is far too large to be learned without an additional regularity assumption.  In applications, however, one often expects a form of effective low complexity.  Many variables may be present, but only some combinations of them have a substantial effect.  The problem is to express this principle in a way that is compatible with the geometry of the cube and leads to a sharp statistical theory.

The Fourier--Walsh expansion provides such a language. It decomposes a function into interaction levels of increasing order, and is a standard tool in the analysis of Boolean functions; see, for example, \citet{odonnell2014}. For \(x=(x_1,\ldots,x_d)\in\Hd\) and \(S\subseteq[d]\), define
\[
  \chi_S(x)=\prod_{i\in S}(2x_i-1),
  \qquad \chi_\emptyset\equiv 1.
\]
If \(X\) is uniformly distributed on \(\Hd\), the Fourier--Walsh coefficient of \(f\) associated with \(S\) is
\[
  \wh f(S)=\E\big[f(X)\chi_S(X)\big],
\]
where the expectation is taken with respect to the law of \(X\). With these conventions,
\[
  f(x)=\sum_{S\subseteq[d]}\wh f(S)\chi_S(x),
  \qquad x\in\Hd.
\]
The coefficient \(\wh f(S)\) measures the contribution of the interaction among the variables indexed by \(S\). Coefficients with \(|S|=1\) describe main effects, coefficients with \(|S|=2\) describe pairwise interactions, and higher-degree coefficients describe increasingly complex interactions.

For \(0\leqslant D\leqslant d\), the Fourier expansion truncated at
degree \(D\) contains
\[
  N(d,D)=\sum_{k=0}^{D} \binom{d}{k}
\]
coefficients. The quantity \(N(d,D)\) is the first statistical bottleneck. If it is much larger than \(n\), the whole degree-\(D\) Fourier projection cannot be estimated reliably as a block from \(n\) random observations.

This bottleneck is already visible in simple numerical examples. If \(d=1000\) and \(n=10^6\), then
\[
  N(1000,2)=1+1000+\binom{1000}{2}=500\,501.
\]
Thus the number of Fourier--Walsh interactions of degrees \(0\), \(1\), and
\(2\) remains within the sample-size budget. The next layer, however, contains
\[
  \binom{1000}{3}=166\,167\,000
\]
additional coefficients, far more than the available number of observations. In this sense, degree \(2\) is resolvable at this sample size, whereas degree \(3\) is not. If instead \(d=100\) and \(n=10^6\), the corresponding boundary moves to degree \(3\), since
\[
  N(100,3)=166\,751,
  \qquad
  N(100,4)=4\,087\,976.
\]
These examples already contain the main phenomenon of the paper. The sample size determines the highest interaction order up to which the cube is statistically visible.

\subsection*{The resolution principle}

The preceding counting argument suggests a simple notion of statistical
resolution. We define the resolvable Fourier degree by
\begin{equation}\label{eq:Dstar-intro}
  D^*(d,n)=\max\{0\leqslant D\leqslant d:N(d,D)\leqslant n\}.
\end{equation}
Thus \(D^*(d,n)\) is the largest degree up to which all Fourier--Walsh
interactions can be retained while keeping the dimension of the linear
space \(\operatorname{span}\{\chi_S: |S|\leqslant D\}\), namely
\(N(d,D)\), within the sample-size budget.

If \(n\geqslant 2^d\), then \(D^*(d,n)=d\).  This is the saturated
regime: the whole Fourier expansion contains exactly \(2^d\) coefficients,
so the full Fourier--Walsh basis is, at least dimensionally, within the
sample-size budget.  In this case, there is no unresolved Fourier layer.
The main focus of the paper is the opposite, non-saturated regime, where 
\(D^*(d,n)<d\) and the first unresolved Fourier layer has degree
\(D^*(d,n)+1\).

Our main result, Theorem~\ref{thm:ellipsoid}, shows that this dimensional
notion of resolution is not merely a heuristic. Once the Fourier coefficients are controlled through the Fourier--Sobolev
energy
\[
  I^{(2)}(f)
  =
  4\sum_{S\ne\emptyset}|S|\wh f(S)^2
  \leqslant B,
\]
the resolvable Fourier degree determines the minimax risk.  The quantity \(I^{(2)}(f)\) is the Fourier--Walsh form of a discrete
first-order Sobolev energy.  Equivalently, \(I^{(2)}(f)\) is the squared
\(L^2\)-norm of the discrete gradient.  In Fourier--Walsh coordinates, the
factor \(|S|\) penalizes each coefficient according to the degree of the
corresponding interaction.  This immediately gives the tail estimate
\[
  \sum_{|S|>D}\wh f(S)^2
  \leqslant
  \frac{1}{D+1}\sum_{|S|>D}|S|\wh f(S)^2
  \leqslant
  \frac{1}{D+1}\sum_{S\ne\emptyset}|S|\wh f(S)^2
  \leqslant
  \frac{B}{4(D+1)}.
\]
Thus, retaining all Fourier--Walsh coefficients up to degree \(D\) leaves a
squared truncation error at most \(B/(4(D+1))\).

This tail bound suggests a natural estimator.  We truncate the
Fourier--Walsh expansion at degree \(D\) and estimate only the coefficients
with \(|S|\leqslant D\).  For each such \(S\), define
\begin{equation}\label{eq:estimator-intro}
  \wt f(S)=\frac{1}{n}\sum_{j=1}^n Y_j\chi_S(X_j),
\end{equation}
and form the empirical projection
\[
  \wt f_D(x)=\sum_{|S|\leqslant D}\wt f(S)\chi_S(x).
\]
The stochastic error is governed by the number \(N(d,D)\) of retained
Fourier--Walsh coefficients, while the squared truncation error is controlled
by \(B/(D+1)\).  Thus the same cutoff degree \(D\) has two complementary
roles.  Through \(N(d,D)\), it determines the dimension of the retained
linear space.  Through the Fourier--Sobolev tail bound, it determines the
deterministic truncation error.  For fixed \(B\) and fixed noise level, the optimal cutoff lies just below
the resolution boundary, and the resulting risk is of order
\[
  \frac{B}{D^*(d,n)}.
\]
Theorem~\ref{thm:ellipsoid} proves that this rate is minimax optimal over
the centered Fourier--Sobolev ellipsoid.

\subsection*{Finiteness versus resolvable Fourier complexity}

The finiteness of \(\{0,1\}^d\) should not be confused with a parametric
simplification.  Although every regression function can be viewed as a vector
in \(\mathbb R^{2^d}\), this representation is statistically unhelpful in the
regimes considered here, where \(2^d\) is far larger than the sample size.
The relevant question is not whether the ambient function space is finite
dimensional, but how many structured Fourier--Walsh directions can be
estimated from the data.

In our setting, these directions are ordered by interaction degree.  The
effective statistical complexity is therefore measured by the resolvable
Fourier degree \(D^*(d,n)\), rather than by the total number \(2^d\) of
Fourier--Walsh coefficients.  Hence the rate \(B/D^*(d,n)\) captures a
genuine high-dimensional effect: as the cube dimension \(d\) grows, the data
reach progressively fewer levels of the Fourier hierarchy.  This dependence
would be invisible from the crude finite-dimensional viewpoint based only on
the size \(2^d\) of the domain.

\subsection*{Monotonicity and intrinsic dimension}

The Fourier--Sobolev result is not specific to monotone functions.  It gives
matching upper and lower minimax bounds for all functions whose
Fourier--Walsh coefficients satisfy a quadratic smoothness constraint.
Monotone regression raises a different question.  If a function is known to
be nondecreasing in each coordinate, can this order structure improve the
statistical problem, and how should the dimension of such a model be
measured?

This connection is most naturally expressed through discrete derivatives.
For \(x=(x_1,\ldots,x_d)\in\{0,1\}^d\) and \(i\in\{1,\ldots,d\}\), let
\(x^{i\to a}\) denote the point obtained from \(x\) by replacing its
\(i\)-th coordinate by \(a\in\{0,1\}\).  Define
\[
  \Delta_i f(x)=f(x^{i\to1})-f(x^{i\to0}).
\]
Monotonicity is the sign condition \(\Delta_i f(x)\geqslant0\) for every
\(i\) and \(x\).  On the other hand, the Fourier--Sobolev energy admits the
derivative representation
\[
  I^{(2)}(f)=\sum_{i=1}^d \E\big[(\Delta_i f(X))^2\big],
\]
where \(X\) is uniformly distributed on \(\Hd\).  Thus \(I^{(2)}(f)\)
measures the squared size of the discrete gradient, while monotonicity fixes
its sign.  This combination is central to the monotone constructions
developed later in the paper.  The functions are built from nonnegative
discrete increments, while the Fourier--Sobolev constraint bounds their
total quadratic size.

Monotonicity alone, however, does not remove the high-dimensional nature of
the problem.  A monotone function may still depend on all \(d\) coordinates.
We therefore distinguish the ambient dimension \(d\) from the intrinsic
dimension, defined as the number of active coordinates.  The intrinsic
monotone model considered in this paper assumes that at most \(s\)
coordinates are active, but does not assume that this active set is known.
The resulting minimax rate separates the Fourier resolution on the active
subcube from the combinatorial cost of identifying this subcube among the
\(d\) ambient coordinates.

\subsection*{Contributions and relation to the literature}

The main contribution of the paper is to identify \(D^*(d,n)\) as the
effective statistical resolution of the Boolean cube under a
Fourier--Sobolev constraint.  Theorem~\ref{thm:ellipsoid} proves the minimax
rate \(B/D^*(d,n)\) over the centered Fourier--Sobolev ellipsoid
\(I^{(2)}(f)\leqslant B\).  The matching upper bound is attained by the
explicit empirical Fourier projection estimator described above.

We then develop a monotone theory under the same quadratic budget.
Theorem~\ref{thm:known-support} treats the oracle case where the \(s\)
active coordinates are known and \(s\asymp\log n\).  In this setting, the
minimax rate is \(B/s\).  Theorem~\ref{thm:intrinsic-monotone} treats the
corresponding model in which the active coordinates are unknown.  The upper
bound contains the additional cost \(\log\binom ds/n\), which is the price
of support selection.  Consequently, the minimax rate remains of order
\(B/s\) whenever this selection cost is no larger than the intrinsic
estimation error.

The paper is connected to the general minimax theory based on local metric
entropy.  Results in the spirit of \citet{yang1999}, \citet{neykov2023},
and \citet{prasadan2025reg,prasadan2025} characterize minimax rates through
local packing or entropy quantities and provide procedures adapted to these
characterizations.  Related geometric viewpoints on constrained least
squares, such as \citet{chatterjee2014}, express risks in terms of Gaussian
processes or local geometry of the constraint set. Our contribution is complementary and more explicit.  In the Boolean Fourier--Sobolev geometry 
considered here, ordering the
Fourier--Walsh basis by interaction degree turns the local complexity into
the concrete resolution threshold \(D^*(d,n)\). This yields both a closed-form minimax rate and a simple
empirical Fourier projection estimator.

The paper also uses standard tools from the analysis of Boolean functions.
For background, we refer to \citet{odonnell2014}.  Relations between
discrete derivatives and Fourier--Walsh coefficients are central in this
area, from the KKL theorem \citep{kahn1988} to the
Benjamini--Kalai--Schramm bound \citep{bks1999} and later refinements.
For our purposes, the key consequence is the derivative representation of
the Fourier--Sobolev energy.  It
links the quadratic budget to discrete derivatives and is used in the
monotone lower-bound constructions.

The rest of the article is organized as follows.  Section~\ref{sec:model}
introduces the model, the Fourier--Walsh notation, and the resolvable
Fourier degree.  Section~\ref{sec:estimator} defines the empirical Fourier
projection estimator and records its risk bound.
Section~\ref{sec:main-results} states the minimax theorems.
Section~\ref{sec:discussion-related} discusses monotonicity, intrinsic
dimension, and related work.  The main proofs are given in the Appendix; the remaining proofs are collected in the Supplementary Material.

\section{Statistical model and Fourier notation}
\label{sec:model}

\subsection*{Model, risk, and Fourier basis}

We now briefly recall the statistical model and fix the notation used in
the rest of the paper.  The coordinate index is denoted by $i\in[d]$,
while the observation index is denoted by $j\in[n]$.  Subsets of
coordinates are denoted by $S,T\subseteq[d]$, and degrees by $k$, $m$, or
$D$.

Let $n\geqslant1$, and write $\mu$ for the uniform distribution on $\Hd$.  We observe
\begin{equation}\label{eq:model}
  Y_j=f(X_j)+\varepsilon_j,\qquad j=1,\ldots,n,
\end{equation}
where \(X_1,\ldots,X_n\) are independent with common distribution \(\mu\). 
The noise variables \(\varepsilon_1,\ldots,\varepsilon_n\) are i.i.d.,
centered, and independent of the design.  They are assumed to be
sub-Gaussian with parameter \(\sigma^2>0\), in the sense that
\[
  \E e^{t\varepsilon_j}
  \leqslant
  \exp(\sigma^2t^2/2),
  \qquad t\in\R,\quad j=1,\ldots,n.
\]

Throughout the paper, the risk of an estimator \(\wh f\) is measured in
\(L^2(\mu)\):
\[
  R(\wh f,f)=\E\norm{\wh f-f}_2^2,
  \qquad
  \norm{g}_2^2=\int_{\Hd} g^2\,d\mu
  =2^{-d}\sum_{x\in\Hd} g(x)^2.
\]
The expectation in the risk is taken over both the random design and the
noise; the risk therefore depends on the law of the noise, which we
denote by \(P_\varepsilon\), writing \(R_{P_\varepsilon}(\wh f,f)\)
when this dependence matters.  Let \(\mathcal P_{\sigma^2}\) be the
class of admissible noise laws, namely the distributions of centered
random variables, independent of the design, satisfying the sub-Gaussian
condition above.  For a class \(\mathcal A\), we define the minimax
risk
\[
  R_n^*(\mathcal A)
  =
  \inf_{\wh f}\,
  \sup_{f\in\mathcal A}\,
  \sup_{P_\varepsilon\in\mathcal P_{\sigma^2}}
  R_{P_\varepsilon}(\wh f,f),
\]
where the infimum is over all estimators \(\wh f\) based on the sample
\((X_j,Y_j)_{j=1}^n\).  The supremum over the noise law makes the
two-sided bounds below unambiguous: all upper bounds hold uniformly over
\(\mathcal P_{\sigma^2}\), while all lower bounds are established for
the Gaussian law \(N(0,\sigma^2)\), which belongs to
\(\mathcal P_{\sigma^2}\) and therefore bounds the supremum from
below.

The Walsh functions \(\{\chi_S:S\subseteq[d]\}\) form the Fourier--Walsh
orthonormal basis of \(L^2(\Hd,\mu)\).  Every function \(f:\Hd\to\R\)
admits the expansion
\[
  f=\sum_{S\subseteq[d]} \wh f(S)\chi_S,
  \qquad
  \norm{f}_2^2=\sum_{S\subseteq[d]} \wh f(S)^2.
\]
For an integer \(0\leqslant D\leqslant d\), we write
\[
  N(d,D)=\bigl|\{S\subseteq[d]:|S|\leqslant D\}\bigr|=\sum_{k=0}^{D} \binom{d}{k}.
\]
The resolvable Fourier degree \(D^*(d,n)\) from
\eqref{eq:Dstar-intro} is the largest degree for which the full low-degree
Fourier--Walsh space has dimension at most \(n\).  Thus \(D^*(d,n)=d\) in
the saturated case \(n\geqslant 2^d\).  Otherwise \(D^*(d,n)<d\), and adding
the Fourier layer of degree \(D^*(d,n)+1\) is the first step that makes the
full retained space exceed the sample-size budget.

\subsection*{Discrete derivatives and the Fourier--Sobolev identity}

For \(x\in\Hd\), \(i\in[d]\), and \(a\in\{0,1\}\), let \(x^{i\to a}\)
denote the point obtained from \(x\) by replacing its \(i\)-th coordinate by
\(a\) and leaving all other coordinates unchanged.  We write
\[
  \Delta_i f(x)=f(x^{i\to 1})-f(x^{i\to 0}).
\]
The coordinate contribution of the \(i\)-th discrete derivative is
\[
  I_i^{(2)}(f)=\int_{\Hd}(\Delta_i f)^2\,d\mu,
\]
and the total Fourier--Sobolev energy is
\begin{equation}\label{eq:fourier-sobolev-derivative}
  I^{(2)}(f)=\sum_{i=1}^d I_i^{(2)}(f)
  =
  \sum_{i=1}^d \int_{\Hd}(\Delta_i f)^2\,d\mu .
\end{equation}
The Fourier--Walsh expansion gives an explicit form for these increments.
If
\[
  f=\sum_{S\subseteq[d]}\wh f(S)\chi_S,
\]
then
\begin{equation}\label{eq:deriv-fourier}
  \Delta_i f
  =
  2\sum_{S\ni i}\wh f(S)\chi_{S\setminus\{i\}}.
\end{equation}
Indeed, if \(i\notin S\), then the character \(\chi_S\) does not depend on
\(x_i\), so
\[
  \chi_S(x^{i\to 1})=\chi_S(x^{i\to 0}).
\]
If \(i\in S\), writing
\[
  \chi_S=\chi_{\{i\}}\chi_{S\setminus\{i\}},
\]
and using
\[
  \chi_{\{i\}}(x^{i\to 1})-\chi_{\{i\}}(x^{i\to 0})=1-(-1)=2,
\]
gives \eqref{eq:deriv-fourier}.  Squaring \eqref{eq:deriv-fourier} and
integrating with respect to \(\mu\), the orthonormality of the Walsh
functions gives
\begin{equation}\label{eq:inf2-fourier}
  I_i^{(2)}(f)=4\sum_{S\ni i}\wh f(S)^2.
\end{equation}
Summing \eqref{eq:inf2-fourier} over \(i\) gives the Fourier--Sobolev
identity
\[
  I^{(2)}(f)=4\sum_{S\ne\emptyset}|S|\wh f(S)^2.
\]
In particular, for \(0\leqslant D\leqslant d\), the Fourier mass above
degree \(D\) satisfies
\begin{equation}\label{eq:tail-bound}
  \sum_{|S|>D}\wh f(S)^2
  \leqslant \frac{I^{(2)}(f)}{4(D+1)}.
\end{equation}
This is the regularity estimate used to control the squared truncation error
of the projection estimator.

The main minimax theorem is stated for the centered Fourier--Sobolev
ellipsoid
\begin{equation}\label{eq:ellipsoid}
  \cE_B
  =
  \left\{
    f:\Hd\to\R:\;
    \wh f(\emptyset)=0,\ 
    I^{(2)}(f)\leqslant B
  \right\}.
\end{equation}
Here \(\wh f(\emptyset)\) is the coefficient of the constant Walsh
function \(\chi_\emptyset\equiv1\); thus \(\wh f(\emptyset)=0\)
means that \(f\) has mean zero under \(\mu\).  This removes the constant
direction, which is not penalized by \(I^{(2)}\), and ensures that the class
is bounded in \(L^2(\mu)\):
\[
  \norm{f}_2^2
  =
  \sum_{S\ne\emptyset}\wh f(S)^2
  \leqslant
  \frac{I^{(2)}(f)}{4}
  \leqslant \frac{B}{4}.
\]

\section{The empirical Fourier projection estimator}\label{sec:estimator}

For a truncation degree \(D\), the empirical coefficients and the
corresponding projection are defined by \eqref{eq:estimator-intro}.  When
the target function is known to take values in \([0,1]\), as in the
monotone model, we use the clipped estimator
\[
  \wh f_D(x)=\Pi_{[0,1]}\wt f_D(x),
  \qquad
  \Pi_{[0,1]}(t)=\max(0,\min(t,1)).
\]
On the centered ellipsoid \(\cE_B\), no clipping is used.

The estimator is deliberately simple.  It keeps the empirical
Fourier--Walsh coefficients up to degree \(D\) and discards the higher
degrees.  Its risk is therefore governed by the usual bias--variance
tradeoff: the variance is proportional to the number \(N(d,D)\) of
retained coefficients, while the bias is controlled by the
Fourier--Sobolev tail estimate~\eqref{eq:tail-bound}.  The following proposition records this
calculation.

\begin{proposition}[Projection risk bound]\label{prop:projection}
Let \(B>0\), let \(0\leqslant D\leqslant d\) be an integer, and let
\(V\) be an upper bound on \(\E Y^2\). Then the empirical Fourier
projection estimator satisfies
\begin{equation}\label{eq:projection-risk}
  \E\norm{\wt f_D-f}_2^2
  \leqslant
  \frac{B}{4(D+1)}
  +
  \frac{V N(d,D)}{n}
\end{equation}
for every \(f\) such that \(I^{(2)}(f)\leqslant B\). If, in addition,
\(f\) takes values in \([0,1]\), then the same bound holds for the
clipped estimator \(\wh f_D=\Pi_{[0,1]}\wt f_D\).

Moreover, one may take \(V=\sigma^2+B/4\) on the centered ellipsoid
\(\cE_B\), and \(V=\sigma^2+1\) for functions bounded by \([0,1]\).
\end{proposition}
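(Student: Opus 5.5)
By Parseval, the error of \(\wt f_D\) splits orthogonally into a truncation part and an estimation part:
\[
  \norm{\wt f_D-f}_2^2=\sum_{|S|\leqslant D}\bigl(\wt f(S)-\wh f(S)\bigr)^2+\sum_{|S|>D}\wh f(S)^2 .
\]
The second term is deterministic. Since \(I^{(2)}(f)\leqslant B\), the tail bound \eqref{eq:tail-bound} bounds it by \(B/(4(D+1))\). So it remains to bound the expectation of the first sum by \(VN(d,D)/n\).

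For a fixed \(S\), the empirical coefficient \(\wt f(S)\) is an average of the i.i.d. variables \(Z_j=Y_j\chi_S(X_j)\).

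\begin{itemize}
  \item \emph{Unbiasedness.} Independence of the noise from the design and \(\E\varepsilon_j=0\) give \(\E Z_j=\E[f(X)\chi_S(X)]=\wh f(S)\).
  \item \emph{Variance.} Hence \(\E(\wt f(S)-\wh f(S))^2=\Var(Z_1)/n\leqslant \E Z_1^2/n\). Since \(\chi_S^2\equiv1\), we get \(\E Z_1^2=\E Y^2\leqslant V\).
\end{itemize}

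Summing over the \(N(d,D)\) sets with \(|S|\leqslant D\) gives the variance term \(VN(d,D)/n\), which proves \eqref{eq:projection-risk}.

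For the clipped estimator, when \(f\) takes values in \([0,1]\): the map \(\Pi_{[0,1]}\) is \(1\)-Lipschitz and fixes \(f(x)\). So pointwise \(|\Pi_{[0,1]}\wt f_D(x)-f(x)|\leqslant|\wt f_D(x)-f(x)|\). Integrating against \(\mu\) and taking expectations, the same bound carries over.

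For the admissible choices of \(V\), expand \(\E Y^2=\E f(X)^2+2\E[f(X)\varepsilon]+\E\varepsilon^2\).

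\begin{itemize}
  \item The cross term vanishes by independence and centering.
  \item The sub-Gaussian condition with parameter \(\sigma^2\) forces \(\E\varepsilon^2\leqslant\sigma^2\). To see this, expand \(\E e^{t\varepsilon}\leqslant e^{\sigma^2t^2/2}\) to second order as \(t\to0\).
  \item On \(\cE_B\), \(\E f(X)^2=\norm{f}_2^2\leqslant B/4\), as shown after \eqref{eq:ellipsoid}. This gives \(V=\sigma^2+B/4\).
  \item For \([0,1]\)-valued \(f\), \(\E f(X)^2\leqslant1\). This gives \(V=\sigma^2+1\).
\end{itemize}

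The only step needing a bit of care is the variance bound \(\E\varepsilon^2\leqslant\sigma^2\) from the moment generating function condition. It follows by comparing the Taylor expansions \(1+t^2\E\varepsilon^2/2+o(t^2)\leqslant1+\sigma^2t^2/2+o(t^2)\). Everything else is orthogonality plus the earlier tail estimate.
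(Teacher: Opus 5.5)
Your proposal is correct and follows the paper's proof essentially step for step: the Parseval split into estimation and truncation parts, unbiasedness and the per-coefficient variance bound $V/n$ using $\chi_S^2\equiv1$, the tail estimate for the bias, the contraction property of clipping, and the expansion of $\E Y^2$ to justify the two choices of $V$. The only minor difference is how you obtain $\E\varepsilon^2\leqslant\sigma^2$: you use a second-order Taylor expansion of the moment generating function, which is legitimate because it is finite on all of $\R$. The paper instead uses the elementary inequality $\cosh x\geqslant 1+x^2/2$ applied to $(\E e^{t\varepsilon}+\E e^{-t\varepsilon})/2$, which needs no differentiability argument.
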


The choice of the truncation degree is dictated by the two terms in
\eqref{eq:projection-risk}.  Increasing \(D\) decreases the bias, but
increases the variance through the number \(N(d,D)\) of retained Fourier
coefficients.  The resolvable Fourier degree \(D^*(d,n)\) is the largest
degree for which the full low-degree Fourier--Walsh space has dimension at
most \(n\).  To obtain a risk of order \(B/D^*(d,n)\), the variance term
must itself be no larger than this quantity.

Therefore, for the upper bound, it is convenient to use a cutoff adapted to
the target rate.  When \(D^*(d,n)\geqslant1\) and \(n\) is large enough for the set below
to be nonempty (see Lemma~\ref{lem:balanced-truncation}), define
\(D_\sharp\) as the largest degree not exceeding \(D^*(d,n)\) for which
the variance bound remains at the desired scale:
\begin{equation}\label{eq:Dsharp}
  D_\sharp=D_\sharp(d,n,B,V)
  =
  \max\left\{
    0\leqslant D\leqslant D^*(d,n):
    \frac{V N(d,D)}{n}
    \leqslant
    \frac{B}{4D^*(d,n)}
  \right\}.
\end{equation}
Thus \(D_\sharp\) is a slightly conservative version of the resolution
degree.  It keeps the variance at the same scale as the target rate while
remaining as close as possible to \(D^*(d,n)\).  In the regular
non-saturated regime considered in Theorem~\ref{thm:ellipsoid}, the
technical estimates in the proof show that this conservative cutoff is
still comparable to \(D^*(d,n)\).  It is therefore the cutoff used to
attain the upper bound in the main ellipsoid result.

\paragraph{Computation and tuning.}
The empirical Fourier projection estimator is explicit.  For a fixed
degree \(D\), it is obtained by computing the empirical coefficients
\[
  \wt f(S)=\frac1n\sum_{j=1}^n Y_j\chi_S(X_j),
  \qquad |S|\leqslant D.
\]
Thus the number of coefficients is
\[
  N(d,D)=\sum_{k=0}^D\binom dk .
\]
A direct implementation has computational cost of order
\(nN(d,D)D\), since each Walsh character of degree at most \(D\) can be
evaluated in at most \(D\) multiplications.  When the estimator is run
at the resolvable degree \(D=D^*(d,n)\), the defining inequality
\(N(d,D^*)\leqslant n\) gives a cost of order at most \(n^2 D^*\).
Moreover, whenever \(d\geqslant 2D^*\), one has
\(N(d,D^*)\geqslant\binom{d}{D^*}\geqslant 2^{D^*}\), so that
\(D^*\leqslant\log_2 n\). This condition holds under
Assumption~\ref{ass:regular}, where \(d\geqslant\kappa(D^*+1)\) with
\(\kappa>2\).  The cost is thus \(O(n^2\log n)\), with no explicit dependence on the ambient dimension
\(d\).  The degrees \(D^*\) and \(D_\sharp\) are found by increasing \(D\)
from \(0\) and updating \(N(d,D)\) incrementally, with \(D^*\) the
largest \(D\) such that \(N(d,D)\leqslant n\) and \(D_\sharp\) the
largest one satisfying \eqref{eq:Dsharp}.  This costs \(O(D^*)\)
arithmetic operations, negligible compared with forming the estimator.

The choice of \(D_\sharp\) depends on the radius \(B\) and on the moment
bound \(V\) through \eqref{eq:Dsharp}.  The upper bounds proved below
should therefore be read as oracle minimax bounds: they identify the
resolution scale and the corresponding risk when \(B\) and \(V\) are
treated as known.  Data-driven choices of the truncation degree, for
instance by model selection or Lepski-type methods, are not pursued here.

\paragraph{The saturated regime.}
When \(n\geqslant 2^d\), the sample size is at least the number of
Fourier--Walsh coefficients: here \(N(d,d)=2^d\leqslant n\), so that
\(D^*(d,n)=d\) and the degree-\(d\) projection retains the full
Fourier--Walsh expansion.  Equivalently, the Fourier tail is empty:
\[
  \sum_{|S|>d}\wh f(S)^2=0.
\]
The projection at \(D=d\) is therefore unbiased, and the risk reduces to
its variance term.  For any class on which \(\E Y^2\leqslant V\),
\[
  \sup_f R(\wt f_d,f)
  \leqslant
  \frac{V2^d}{n},
\]
where the supremum is over that class.  In this regime the problem reduces
to estimating a vector of length \(2^d\), and the natural scale is the
finite-dimensional parametric rate \(2^d/n\).

We now turn to the non-saturated high-dimensional regime.  The main
ellipsoid theorem is stated in the regular resolution window of
Assumption~\ref{ass:regular}, where \(D^*(d,n)\to\infty\) while remaining
separated from the ambient dimension \(d\).  In that regime the relevant
quantity is not the total number of vertices of the cube, but the largest
complete Fourier degree that can be resolved from the sample.

\section{Main results}\label{sec:main-results}

\subsection{The Fourier--Sobolev ellipsoid}

We now state the central result.  The theorem is non-asymptotic, but its
conditions are most naturally interpreted in an asymptotic window where
the resolvable Fourier degree tends to infinity while remaining well
below the ambient dimension.

\begin{assumption}[Regular resolution window]\label{ass:regular}
Along a sequence of pairs \((d,n)\), with \(d\) allowed to depend on
\(n\), let \(D^*=D^*(d,n)\).  We assume that
\[
  D^*\to\infty
\]
and that, for some fixed constant \(\kappa>2\),
\begin{equation}\label{eq:regular-window}
  d\geqslant \kappa(D^*+1)
\end{equation}
for all sufficiently large \(n\).  The signal budget \(B>0\) and the
sub-Gaussian noise parameter \(\sigma^2>0\) are fixed.
\end{assumption}

The condition \eqref{eq:regular-window} has a simple purpose.  It keeps the
resolved degrees on the increasing side of the binomial profile, well
before the middle layer of the cube.  In this range,
Lemma~\ref{lem:growth} shows that consecutive Fourier layers grow at a
controlled geometric rate.  This prevents the first unresolved layer from
being negligible relative to the cumulative number of resolved
coefficients, which is the comparison needed in the lower bound.

The regular-resolution condition holds in the
logarithmic growth regime
\[
  \log d=(\log n)^b,
  \qquad 0<b<1.
\]
Here \(d\) grows faster than any power of \(\log n\) but slower than any
power of \(n\), so that \(2^d\) exceeds every power of \(n\) while
remaining below \(2^n\); in particular the hypercube is far from
saturated.  In this regime the resolvable degree can be computed
explicitly up to constants, as stated after the theorem.

\begin{theorem}[Minimax rate over the Fourier--Sobolev ellipsoid]\label{thm:ellipsoid}
Let \(\kappa>2\) be fixed, and let \(\cE_B\) be the centered
Fourier--Sobolev ellipsoid defined in \eqref{eq:ellipsoid}.  Then there
exist universal constants \(0<c<C<\infty\) and an integer \(D_0\geqslant1\) depending only on \(B\), \(\sigma^2\), and \(\kappa\),
such that, for every pair \((d,n)\) satisfying
\(d\geqslant\kappa(D^*(d,n)+1)\) and
\(D^*(d,n)\geqslant D_0\),
\[
  c\,\frac{B}{D^*(d,n)}
  \leqslant
  R_n^*(\cE_B)
  \leqslant
  C\,\frac{B}{D^*(d,n)}.
\]
In particular, under Assumption~\ref{ass:regular}, both conditions are
satisfied for all sufficiently large \(n\), and the two-sided bound is
therefore valid.

The upper bound is attained by the empirical Fourier projection estimator
\eqref{eq:estimator-intro}, with truncation degree \(D_\sharp\) defined in
\eqref{eq:Dsharp} using \(V=\sigma^2+B/4\).
\end{theorem}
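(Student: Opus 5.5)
The proof has two halves: an upper bound from Proposition~\ref{prop:projection} at the cutoff \(D_\sharp\), and a lower bound from a hypercube (Assouad) construction placed on the first unresolved Fourier layer. Throughout write \(D^*=D^*(d,n)\).

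\textbf{Upper bound.} We apply Proposition~\ref{prop:projection} with \(V=\sigma^2+B/4\) and \(D=D_\sharp\). By definition of \(D_\sharp\), the variance term is at most \(B/(4D^*)\). It remains to show that \(D_\sharp\geqslant cD^*\), so that the bias term \(B/(4(D_\sharp+1))\) is also \(O(B/D^*)\).

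For this we use the geometric growth of layers in the window \(d\geqslant\kappa(D^*+1)\). For \(k\leqslant D^*+1\), the ratio \(\binom{d}{k}/\binom{d}{k-1}=(d-k+1)/k\) is at least \(\kappa-1>1\). This gives two facts:
\begin{itemize}[nosep]
\item \(N(d,k)\leqslant \frac{\kappa-1}{\kappa-2}\binom dk\);
\item \(N(d,D^*-m)\leqslant (\kappa-1)^{-m}N(d,D^*)\lesssim(\kappa-1)^{-m}n\).
\end{itemize}
Choose \(m\) to be the smallest integer with \((\kappa-1)^{-m}\leqslant B/(4VD^*)\cdot\frac{\kappa-2}{\kappa-1}\). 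Then \(m=O(\log(VD^*/B))\), and \(D-m\) belongs to the set in \eqref{eq:Dsharp}. Hence \(D_\sharp\geqslant D^*-O(\log D^*)\geqslant D^*/2\) once \(D^*\geqslant D_0(B,\sigma^2,\kappa)\). This also shows that the set in \eqref{eq:Dsharp} is nonempty.

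\textbf{Lower bound.} Set \(L=D^*+1\), so \(M:=\binom dL\) satisfies \(M\geqslant n/\text{const}\). Indeed, \(N(d,L)>n\) and \(N(d,L)\leqslant\frac{\kappa-1}{\kappa-2}M\). Let \(\mathcal S\) be a subfamily of \(K=\min(M,\lceil n/c_0\rceil)\) distinct sets of size \(L\), where \(c_0\) is a constant to be chosen.

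For \(\omega\in\{0,1\}^{K}\), define
\[
f_\omega=\delta\sum_{S\in\mathcal S}(2\omega_S-1)\chi_S,\qquad \delta^2=\frac{B}{4LK}.
\]
Each \(f_\omega\) is centered and has \(I^{(2)}(f_\omega)=4LK\delta^2=B\), so \(f_\omega\in\cE_B\).

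We use Gaussian noise \(N(0,\sigma^2)\) with random design. For neighbours \(\omega,\omega'\) differing in one coordinate, the KL divergence between the laws of the sample is
\[
\frac{n}{2\sigma^2}\norm{f_\omega-f_{\omega'}}_2^2=\frac{2n\delta^2}{\sigma^2}=\frac{nB}{2\sigma^2LK}.
\]
Because \(K\asymp n\), this is of order \(B/(\sigma^2 L)\), which tends to \(0\) as \(D^*\) grows and is in particular bounded by a constant for \(D^*\geqslant D_0\).

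Assouad's lemma then gives
\[
\inf_{\wh f}\max_\omega\E\norm{\wh f-f_\omega}_2^2\gtrsim K\delta^2=\frac{B}{4L}\gtrsim\frac{B}{D^*}.
\]
The squared \(L^2\) loss splits across the orthonormal coordinates \(\chi_S\), which is what makes Assouad applicable here.

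\textbf{Main obstacle.} The delicate point is having enough distinct directions, namely \(K\gtrsim n\) sets on layer \(L\). This is exactly where \(\kappa>2\) and Lemma~\ref{lem:growth} enter: they guarantee that the unresolved layer is comparable to \(N(d,D^*+1)>n\) and not negligible. A secondary issue is keeping the constants in \(c\) and \(C\) universal, with all dependence on \(B,\sigma^2,\kappa\) absorbed into \(D_0\). One can arrange this by taking \(K=\min(M,n)\), which keeps the per-coordinate KL at \(\lesssim B/(\sigma^2D^*)\), and choosing \(D_0\) so that this KL is at most \(1\).
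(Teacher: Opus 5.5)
Your proof is correct. The upper bound is the paper's argument, with the explicit growth factor $\kappa-1$ in place of the paper's $a_\kappa$. Two small fixes: ``$D-m$'' should read $D^*-m$, and the factor $(\kappa-2)/(\kappa-1)$ in your choice of $m$ is unnecessary, since $N(d,D^*)\leqslant n$ holds exactly.

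Your lower bound uses the same construction as the paper: signed Walsh characters on the first unresolved layer $|S|=D^*+1$, with amplitude fixed by $I^{(2)}=B$. The difference is that you use Assouad's lemma where the paper uses Fano.

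\emph{The paper's route.} It extracts a Varshamov--Gilbert packing $\mathcal A$ of the whole layer. It then compares $\KL(P_\alpha,P_0)=nB/(8\sigma^2 m)$ with $\log|\mathcal A|\gtrsim\binom{d}{m}\gtrsim n$. Finally it needs the extra estimate $n\geqslant\kappa^{D^*}$ to make the $\log 2/\log|\mathcal A|$ term negligible.

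\emph{Your route.} You use the fact that the squared $L^2$ loss splits over the orthonormal directions $\chi_S$, so only hypotheses differing in one sign need to be compared. The one-flip divergence $nB/(2\sigma^2LK)$ is $O\bigl(B/(\sigma^2D^*)\bigr)$ once $K\gtrsim n$. The resulting bound $K\delta^2=B/(4L)$ does not depend on $K$. You need no packing lemma and no $\log M$ bookkeeping.

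\emph{Comparison.} Both arguments need essentially the same input: the layer must contain $\gtrsim nB/(\sigma^2 D^*)$ sets, and Lemma~\ref{lem:growth} supplies this with room to spare. Fano's lemma does not require the hypotheses to form a hypercube with coordinatewise-decomposable loss, which makes it a convenient single tool for the paper; the same lemma is reused for the monotone middle-layer packing. Assouad is the more economical choice here, because this construction is exactly such a hypercube.
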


The upper bound follows from Proposition~\ref{prop:projection}.  Indeed,
whenever \(d\geqslant\kappa(D^*(d,n)+1)\) and \(D^*(d,n)\)
exceeds the threshold of Lemma~\ref{lem:balanced-truncation}, the cutoff
\(D_\sharp\) in \eqref{eq:Dsharp} is well defined and satisfies
\[
  D_\sharp+1
  \geqslant
  \frac12 D^*(d,n).
\]
Applying Proposition~\ref{prop:projection} with \(D=D_\sharp\) and
\(V=\sigma^2+B/4\) gives
\[
  \sup_{f\in\cE_B} R(\wt f_{D_\sharp},f)
  \leqslant
  \frac{B}{4(D_\sharp+1)}
  +
  \frac{B}{4D^*(d,n)}
  \leqslant
  \frac{3B}{4D^*(d,n)}.
\]

Theorem~\ref{thm:ellipsoid} expresses the minimax rate through the
resolvable degree \(D^*(d,n)\).  The following corollary spells out this
quantity, and hence the resulting rate, in the logarithmic growth regime
mentioned above.

\begin{corollary}[Logarithmic growth regime]\label{cor:regimes}
Assume that \(\log d=(\log n)^b\) for some \(0<b<1\).  Then Assumption~\ref{ass:regular} holds for all
sufficiently large \(n\), and
\[
  D^*(d,n)
  \asymp
  \frac{\log n}{\log d}
  \asymp
  (\log n)^{1-b}.
\]
Consequently,
\[
  R_n^*(\cE_B)
  \asymp
  B\frac{\log d}{\log n}
  \asymp
  B(\log n)^{-(1-b)}.
\]
The implicit constants in these equivalences, and the threshold in
\(n\) beyond which they hold, may depend on \(b\), \(B\), and
\(\sigma^2\).
\end{corollary}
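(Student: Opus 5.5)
The plan is to prove Corollary~\ref{cor:regimes} in three steps. First we estimate \(D^*(d,n)\) through elementary binomial bounds. Then we verify Assumption~\ref{ass:regular}. Finally we invoke Theorem~\ref{thm:ellipsoid}.

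Write \(L=\log n\), so that \(\log d=L^b\). For \(1\leqslant D\leqslant d/2\) we use the sandwich
\[
  \Bigl(\frac dD\Bigr)^D\leqslant\binom dD\leqslant N(d,D)\leqslant (D+1)\binom dD\leqslant (D+1)\Bigl(\frac{ed}{D}\Bigr)^D .
\]
Taking logarithms gives
\[
  D(\log d-\log D)\leqslant \log N(d,D)\leqslant D(\log d+1-\log D)+\log(D+1).
\]

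For the upper estimate on \(D^*\), apply the left inequality with \(D=D^*\) and use \(N(d,D^*)\leqslant n\). This gives \(D^*(\log d-\log D^*)\leqslant L\). We first check that \(D^*\leqslant 2L\). Indeed, if \(d\geqslant 2D\) then \(N(d,D)\geqslant 2^D\), as already noted after the computation paragraph. Any \(D\) with \(2^D>n\) is therefore excluded, and the case \(D>d/2\) cannot occur because \(d\) is huge compared with \(L\). Consequently
\[
  \log D^*\leqslant \log(2L)=o(L^b)=o(\log d),
\]
since \(\log L=o(L^b)\). Hence \(\log d-\log D^*\geqslant \tfrac12\log d\) for large \(n\), and \(D^*\leqslant 2L/\log d=2L^{1-b}\).

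For the lower estimate, set \(D_1=\lfloor L/(2\log d)\rfloor\), which tends to infinity because \(1-b>0\). The right inequality gives
\[
  \log N(d,D_1)\leqslant D_1(\log d+1)+\log(D_1+1)\leqslant \tfrac12 L+o(L)<L
\]
for large \(n\). Thus \(N(d,D_1)\leqslant n\) and \(D^*\geqslant D_1\asymp L^{1-b}\). Together with the upper estimate, this yields \(D^*\asymp L/\log d=L^{1-b}\), and in particular \(D^*\to\infty\).

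Assumption~\ref{ass:regular} now follows directly. We have \(d=\exp(L^b)\), whereas \(\kappa(D^*+1)\leqslant 3\kappa L^{1-b}\). Since \(\exp(L^b)\) eventually dominates any power of \(L\), condition \eqref{eq:regular-window} holds for every fixed \(\kappa>2\) once \(n\) is large. For definiteness, fix \(\kappa=3\).

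Theorem~\ref{thm:ellipsoid} then applies once \(D^*\geqslant D_0(B,\sigma^2,\kappa)\), and this is eventually true because \(D^*\to\infty\). It gives
\[
  R_n^*(\cE_B)\asymp B/D^*\asymp B\log d/\log n=B(\log n)^{-(1-b)}.
\]
The thresholds depend on \(b\) through the comparisons \(\log L=o(L^b)\) and \(L^{1-b}\to\infty\), and on \(B\) and \(\sigma^2\) through \(D_0\).

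The main obstacle is the lower-tail control \(\log D^*=o(\log d)\). It is needed so that the \(-D\log D\) term does not spoil the upper estimate on \(D^*\), and it is handled by the crude a priori bound \(D^*\leqslant\log_2 n\).
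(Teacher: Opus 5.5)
Your proof is correct and follows essentially the paper's route. Both use the sandwich $(d/D)^D\leqslant N(d,D)\leqslant (D+1)(ed/D)^D$ with a test degree $\lfloor c\log n/\log d\rfloor$, $c<1$, for the lower bound, and both absorb the $-D\log D$ term via $\log D=o(\log d)$. The only difference is the upper bound: you plug in $D^*$ itself and control $\log D^*$ through the a priori bound $D^*\leqslant\log_2 n$ (valid since $N(d,\cdot)$ is increasing and $N(d,\lfloor d/2\rfloor)\geqslant 2^{\lfloor d/2\rfloor}>n$ for large $n$), whereas the paper tests $D=\lfloor C\log n/\log d\rfloor$ with $C>1$, which also gives the sharper $D^*=(1+o(1))\log n/\log d$.
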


This regime interpolates between two extremes.  If \(b\) is close to zero,
then \(\log d\) grows slowly compared with \(\log n\), the resolvable degree
is close to logarithmic in \(n\), and the rate is close to \(B/\log n\).
As \(b\) increases toward one, the ambient dimension grows faster, the
resolvable degree grows more slowly, and the rate deteriorates.  The endpoint \(b=1\) is outside 
the regular resolution window considered in
Theorem~\ref{thm:ellipsoid}.  In that case the heuristic value
\(\log n/\log d\) no longer diverges, and the mechanism of the theorem does
not yield a vanishing risk over the full ellipsoid.

\subsection{Monotone models with Fourier--Sobolev budgets}

We now turn to monotone regression functions.  In this
section, monotone functions are assumed to take values in \([0,1]\).  This
boundedness condition fixes the scale of the monotone model and also allows
us to use the clipped projection estimator. For \(B>0\), define
\begin{equation}
\label{eq:Mdb}
  \cM_{d,B}
  =
  \left\{
    f:\Hd\to[0,1]:
    f \text{ is coordinatewise nondecreasing and }
    I^{(2)}(f)\leqslant B
  \right\}.
\end{equation}
The projection bound applies immediately to this class.  Indeed, the range
condition gives the moment bound \(V=\sigma^2+1\), while the quadratic
Fourier--Sobolev budget controls the projection bias.  Thus, under Assumption~\ref{ass:regular}, for all sufficiently large \(n\),
the clipped empirical Fourier projection estimator satisfies
\[
  \sup_{f\in\cM_{d,B}}R(\wh f_{D_\sharp},f)
  \leqslant
  C\,\frac{B}{D^*(d,n)},
\]
where \(D_\sharp\) is defined in \eqref{eq:Dsharp} with
\(V=\sigma^2+1\). This upper bound is a direct consequence of
Proposition~\ref{prop:projection}; it does not use monotonicity, except
through the range constraint.

\subsection*{Known-support monotone model}

We first consider the oracle case in which the relevant coordinates are
known.  Without loss of generality, we identify them with the first \(s\)
coordinates, so that the regression function is defined on the
\(s\)-dimensional cube \(\Hs\). Accordingly, throughout this subsection, risks are
computed with respect to the uniform design on \(\Hs\).

Let \(s=s_n\) be an integer sequence satisfying, for some fixed constants
\(\varepsilon>0\) and \(C_0<\infty\),
\begin{equation}\label{eq:known-support-window}
  (1+\varepsilon)\log_2 n
  \leqslant
  s
  \leqslant
  C_0\log_2 n.
\end{equation}
The lower bound is the essential one: it ensures \(2^s\geqslant
n^{1+\varepsilon}\), so that the active subcube is far from saturated,
while the upper bound only keeps \(s\) of logarithmic order. Under the growth condition
\eqref{eq:known-support-window}, Assumption~\ref{ass:regular}
holds on the \(s\)-dimensional cube (as shown in the Supplementary Material)  and
\[
  D^*(s,n)\asymp s\asymp \log n .
\]
Thus, on the known \(s\)-dimensional cube, the Fourier--Sobolev projection
rate \(B/D^*(s,n)\) is of order \(B/s\).

On \(\Hs\), define the monotone Fourier--Sobolev class
\begin{equation}\label{eq:known-support-class}
  \cG_{s,B}
  =
  \left\{
    g:\Hs\to[0,1]:
    g\text{ is coordinatewise nondecreasing and }
    I^{(2)}(g)\leqslant B
  \right\}.
\end{equation}
This is exactly the ambient class \(\cM_{s,B}\) of \eqref{eq:Mdb} with
\(d=s\); we use the notation \(\cG_{s,B}\) to emphasize that it lives on
the active subcube.

\begin{theorem}[Known-support monotone model]\label{thm:known-support}
Assume that the growth condition \eqref{eq:known-support-window} holds,
and let \(\cG_{s,B}\) be the monotone Fourier--Sobolev class defined in
\eqref{eq:known-support-class}.  Then there exist constants \(0<c<C<\infty\), depending only on
\(\varepsilon\) and \(C_0\), and an integer \(n_0\), depending only on
\(B\), \(\sigma^2\), \(\varepsilon\), and \(C_0\), such that, for all
\(n\geqslant n_0\),
\[
  c\,\frac{B}{s}
  \leqslant
  R_n^*(\cG_{s,B})
  \leqslant
  C\,\frac{B}{s}.
\]
The upper bound is attained by the clipped empirical Fourier projection
estimator on \(\Hs\), with truncation degree \(D_\sharp\) defined in
\eqref{eq:Dsharp} using \(d=s\) and \(V=\sigma^2+1\).
\end{theorem}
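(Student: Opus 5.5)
The proof splits into an upper bound, obtained from the projection estimator, and a lower bound, obtained from a monotone Assouad construction on the $s$-cube.

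\emph{Upper bound.} I would reduce to Theorem~\ref{thm:ellipsoid}'s upper-bound argument on $\Hs$. Under \eqref{eq:known-support-window} the Supplementary Material gives Assumption~\ref{ass:regular} on the $s$-cube, with $D^*(s,n)\asymp s$ and $s\geqslant\kappa(D^*(s,n)+1)$ for a suitable $\kappa>2$ depending on $\varepsilon$. Functions in $\cG_{s,B}$ take values in $[0,1]$, so $V=\sigma^2+1$ is admissible. Lemma~\ref{lem:balanced-truncation} then makes $D_\sharp$ well defined with $D_\sharp+1\geqslant D^*(s,n)/2$ once $n\geqslant n_0$. Proposition~\ref{prop:projection}, applied to the clipped estimator, gives risk at most $3B/(4D^*(s,n))\leqslant CB/s$. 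The only care needed is that the threshold $n_0$ absorbs the dependence on $B$ and $\sigma^2$ through $V$.

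\emph{Lower bound: construction.} I would build a hypercube of monotone functions around a ``ramp'' in the Hamming weight $|x|$. Fix a window width $w=K\sqrt s$ and a slope $t=\sqrt{B/(2s)}$, and set
\[
  g_0(x)=\tfrac12+t\,\phi(|x|-s/2),\qquad \phi(u)=\max(-w,\min(u,w)).
\]
This function is nondecreasing, and $\Delta_i g_0\in\{0,t\}$, so $I^{(2)}(g_0)\leqslant st^2=B/2$. Its range is $\tfrac12\pm tK\sqrt s=\tfrac12\pm K\sqrt{B/2}$, so choosing $K\asymp\min(1,B^{-1/2})$ keeps the values in $[1/4,3/4]$. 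If $K\sqrt s<1$ the window is degenerate; this is excluded for $n\geqslant n_0$, with the threshold depending on $B$.

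Next let $A$ be the set of points with $|x|-s/2\in(-w,w)$ and $|x|$ even. By the central limit theorem, $\mu(A)\geqslant c_K>0$ for large $s$. For $\omega\in\{-1,1\}^A$ put $g_\omega=g_0+\delta\sum_{x\in A}\omega_x\one_{\{x\}}$ with $\delta=t/4$. Three properties then need checking.
\begin{itemize}
\item Monotonicity: every edge joins a point of $A$ to a point of odd weight not in $A$, and inside the window $g_0$ increases by $t>2\delta$ across such an edge.
\item Range: since $\delta$ is tiny, $g_\omega$ still takes values in $[0,1]$.
\item Energy: each edge derivative is at most $t+\delta$ in absolute value. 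The edges in the window number about $s2^{s-1}\mu(\text{window})$, so $I^{(2)}(g_\omega)\leqslant s(5t/4)^2\leqslant B$ after slightly shrinking $t$ by a constant.
\end{itemize}

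\emph{Lower bound: Assouad step.} Flipping coordinate $x$ changes $\|\cdot\|_2^2$ by $4\delta^2 2^{-s}$. The Kullback--Leibler divergence between the two Gaussian-noise laws of the sample is $n\cdot2^{-s}\cdot 4\delta^2/(2\sigma^2)\leqslant 2\delta^2 n^{-\varepsilon}/\sigma^2\to0$, because $2^s\geqslant n^{1+\varepsilon}$. Assouad's lemma then gives
\[
  R_n^*\gtrsim |A|\,2^{-s}\delta^2\gtrsim c_K\,t^2\asymp B/s .
\]

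The step I expect to be the main obstacle is the simultaneous tuning of $K$, $t$ and $\delta$. It must keep the range in $[0,1]$ and the energy at most $B$, while still perturbing a constant fraction of the cube with amplitude of order $\sqrt{B/s}$; naive constructions, such as perturbing only the middle layer, lose a factor $\sqrt s$. The even-weight-layers trick is what restores a constant fraction. I would first verify the energy count precisely, by computing $\sum_i\E(\Delta_i g_\omega)^2$ edge by edge inside the window. I would then check that the constants $c$ and $C$ depend only on $\varepsilon$ and $C_0$, with all dependence on $B$ and $\sigma^2$ pushed into $n_0$.
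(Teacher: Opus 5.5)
Your upper bound is the paper's argument, and your Assouad/KL step is sound. The departure is the lower-bound construction. You use a ramp over $\asymp K\sqrt s$ layers with perturbations on the even layers inside it; the paper uses a step at the middle layer $\Lambda_m$, a Varshamov--Gilbert packing, and Fano.

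As tuned, however, your construction does not prove the statement. You bound the energy crudely by $I^{(2)}(g_\omega)\leqslant s(5t/4)^2$, so you must take $t^2\asymp B/s$. The range constraint $tK\sqrt s\leqslant 1/4$ then forces $K\asymp\min(1,B^{-1/2})$. The window contains $\asymp K\sqrt s$ layers, each of mass $\asymp s^{-1/2}$, so $\mu(A)\asymp K$. Your final bound is therefore $\mu(A)\delta^2\asymp\min(1,B^{-1/2})\,B/s$, which is only of order $\sqrt B/s$ for large $B$. Your constant $c_K$ depends on $B$ and degenerates as $B\to\infty$. So the $B$-dependence cannot be pushed into $n_0$ as you intend, and $c$ does not depend only on $\varepsilon$ and $C_0$.

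The fix is to charge energy only to the edges where $g_\omega$ actually varies. These are the edges in the window, a fraction $\asymp K$ of all edges, so $I^{(2)}(g_\omega)\lesssim st^2K$. This permits $t^2\asymp B/(sK)$, and the range condition becomes $\sqrt{BK}\lesssim 1$, i.e. $K\asymp\min(1,1/B)$. Then $\mu(A)\delta^2\asymp K\cdot B/(sK)=B/s$ with a $B$-free constant; the requirement $K\sqrt s\gtrsim 1$ only enters $n_0$.

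The same oversight underlies your remark that perturbing only the middle layer loses a factor $\sqrt s$. It does not, and this is exactly what the paper does. A step of height $\beta$ at $\Lambda_m$ costs only $I^{(2)}\lesssim s\beta^2\binom{s}{m}2^{-s}\asymp\beta^2\sqrt s$, so $\beta^2\asymp B/\sqrt s$ is admissible. The layer has mass $\asymp s^{-1/2}$, giving squared separation $\asymp B/s$. With correct accounting, the product of the perturbed mass and the squared amplitude is of order $B/s$ for any window width compatible with the range constraint. No constant fraction of the cube is needed.

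The one-layer choice also avoids a boundary defect in your construction. As defined, $A$ contains points $x$ with $u=|x|-s/2$ within distance $1$ of $\pm w$. For such a point, the neighbour across the window boundary sees an increment of $g_0$ equal to $t(w-u)$ or $t(u+w)$, which may be smaller than $\delta$, so monotonicity can fail there. Restricting $A$ to $|u|\leqslant w-1$ fixes this.
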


The lower bound uses the monotone structure in an essential way.  The
packing is placed on the middle layer of \(\Hs\), where changing the
value at one point imposes no monotonicity constraint within the layer
itself.  Each pattern on that layer is then extended to a monotone function on
all of \(\Hs\), constant below the layer and constant above it. The Fourier--Sobolev 
budget fixes the admissible amplitude of the
construction.

\subsection*{Intrinsic unknown-support monotone model}

We now separate the ambient dimension from the intrinsic monotone dimension.
For \(f:\Hd\to\mathbb R\), define the active support by
\[
  \operatorname{supp}(f)
  =
  \{i\in[d]:\Delta_i f\not\equiv0\}.
\]
For \(1\leqslant s\leqslant d\), set
\[
  \cM_{d,B,s}
  =
  \left\{
    f\in\cM_{d,B}:
    |\operatorname{supp}(f)|\leqslant s
  \right\}.
\]
The support is not assumed to be known.  The following result shows that
adaptation to the unknown support costs only the logarithmic price of
selecting the active coordinates.

\begin{theorem}[Intrinsic monotone model with unknown support]\label{thm:intrinsic-monotone}
Assume that the growth condition \eqref{eq:known-support-window} holds,
and let \(\cM_{d,B,s}\) be the intrinsic monotone class with unknown
support defined above.  Then there exist constants \(0<c<C<\infty\), depending only on
\(\sigma^2\), \(\varepsilon\), and \(C_0\), and an integer \(n_0\),
depending only on \(B\), \(\sigma^2\), \(\varepsilon\), and \(C_0\), such
that, for all \(n\geqslant n_0\),
\[
  c\,\frac{B}{s}
  \leqslant
  R_n^*(\cM_{d,B,s})
  \leqslant
  C\left(
  \frac{B}{s}
  +
  \frac{\log\binom ds}{n}
  \right).
\]
The upper bound is attained by a model-selection projection estimator
constructed in the Supplementary Material.

Consequently, if
\[
  \log\binom ds
  \leqslant
  A_1\frac{nB}{s}
\]
for some fixed constant \(A_1<\infty\), then
\[
  R_n^*(\cM_{d,B,s})
  \asymp
  \frac{B}{s}.
\]
\end{theorem}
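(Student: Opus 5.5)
The lower bound needs no new construction. We reduce it to Theorem~\ref{thm:known-support}. Let \(g\in\cG_{s,B}\) be defined on the first \(s\) coordinates and extend it to \(\Hd\) by \(f(x)=g(x_1,\ldots,x_s)\). This extension is coordinatewise nondecreasing, takes values in \([0,1]\), and satisfies \(I^{(2)}(f)=I^{(2)}(g)\), because \(\Delta_i f\equiv0\) for \(i>s\) and the remaining derivatives have the same law under \(\mu\). It also satisfies \(|\operatorname{supp}(f)|\leqslant s\) and \(\norm{f-f'}_{L^2(\Hd)}=\norm{g-g'}_{L^2(\Hs)}\). So \(\cG_{s,B}\) embeds isometrically into \(\cM_{d,B,s}\). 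The extra coordinates \(X_{j,s+1},\ldots,X_{j,d}\) are independent of everything relevant, so they act as ancillary randomization. Any estimator on \(\Hd\) can then be turned into one on \(\Hs\) with no larger risk: average it over the last \(d-s\) coordinates and use Jensen's inequality. Hence \(R_n^*(\cM_{d,B,s})\geqslant R_n^*(\cG_{s,B})\geqslant cB/s\) for \(n\geqslant n_0\).

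For the upper bound I will use a model-selection estimator over supports. For each \(T\subseteq[d]\) with \(|T|=s\), let \(\wh f_T\) be the clipped projection estimator on the subcube indexed by \(T\), with degree \(D_\sharp(s,n,B,\sigma^2+1)\). I will split the sample into two halves. The first half builds the candidates \(\{\wh f_T\}\). On the second half I pick \(\wh T\) by minimizing empirical squared loss, with a penalty of order \(\log\binom ds/n\) or simply by uniform selection. Aggregation with exponential weights (progressive mixture), or a hold-out least-squares argument, is an alternative. For the true support \(T_0\) (padded to size \(s\)), the restriction of \(f\) lies in \(\cG_{s,B}\). Theorem~\ref{thm:known-support} then gives \(\E\norm{\wh f_{T_0}-f}_2^2\leqslant CB/s\), using Proposition~\ref{prop:projection} together with \(D^*(s,n)\asymp s\) under \eqref{eq:known-support-window}.

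The selection step needs an oracle inequality over a finite family of \(M=\binom ds\) candidates. The candidates take values in \([0,1]\), \(f\) does too, and the noise is sub-Gaussian. Conditionally on the first half, the hold-out empirical risk minimizer (or the mixture) then satisfies
\[
\E\norm{\wh f_{\wh T}-f}_2^2\leqslant C\min_T\norm{\wh f_T-f}_2^2+C(1+\sigma^2)\frac{\log M}{n}.
\]
For least-squares selection I will prove this with Bernstein-type concentration of the excess loss \((h-f)^2-2\varepsilon(h-f)\) and a union bound over the \(M\) candidates. The variance of this excess loss is bounded by a constant times its mean, because the candidates are bounded and the noise is sub-Gaussian, which is what yields the fast \(\log M/n\) rate instead of \(\sqrt{\log M/n}\). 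I expect this to be the main technical point. The noise is unbounded, so I will either truncate it at level \(\sigma\sqrt{\log n}\) or use the sub-Gaussian moment generating function directly, as in the progressive-mixture analysis. The alternative is to apply that analysis with a temperature of order \(1+\sigma^2\), which gives the inequality in expectation without a union bound. Combining the two steps and taking expectations over the first half yields \(C(B/s+\log\binom ds/n)\). Halving the sample changes \(D^*(s,n/2)\) only by a constant factor in this regime.

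The final statement follows directly. If \(\log\binom ds\leqslant A_1nB/s\), the selection term is at most \(A_1B/s\), so the upper bound is of order \(B/s\) and matches the lower bound.
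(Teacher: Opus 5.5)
Your proposal is correct and follows essentially the same route as the paper. The lower bound goes through the cylindrical embedding with auxiliary coordinates and Jensen averaging. The upper bound uses sample splitting, clipped projections on each of the $\binom ds$ candidate subcubes, and hold-out least-squares selection justified by a Bernstein moment condition plus a union bound. The paper centers the excess loss at the best candidate and takes $D=\lfloor\eta s\rfloor$ via the entropy bound rather than reusing $D_\sharp$ from Theorem~\ref{thm:known-support}, but these differences are cosmetic.

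One caution on your two options for the unbounded noise: use the moment/mgf route, as the paper does with $\E|\varepsilon|^q\leqslant q!(\sqrt2\sigma)^q$. Truncating at $\sigma\sqrt{\log n}$ and applying bounded Bernstein puts the truncation level into the range term, which costs an extra $\sqrt{\log n}$ factor in front of $\log\binom ds/n$.
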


Theorem~\ref{thm:intrinsic-monotone} has a direct interpretation.  When
the active dimension satisfies \(s\asymp\log n\), the minimax risk over the
known-support monotone Fourier--Sobolev class is of order \(B/s\), and
hence of order \(1/\log n\) when \(B\) is fixed.  When the support is
unknown, the same intrinsic term remains present, while an additional
complexity term of order \(\log\binom ds/n\) appears.  This term is the model-selection price for choosing the active
coordinates among the \(\binom ds\) possible supports of size \(s\).

The condition
\[
  \log\binom ds \leqslant A_1\frac{nB}{s}
\]
has a simple interpretation.  Using the standard bound
\[
  \log\binom ds
  \leqslant
  s\log\left(\frac{ed}{s}\right),
\]
it is enough that
\[
  \log\left(\frac{ed}{s}\right)
  \lesssim
  \frac{nB}{s^2}.
\]
Since
\[
  \log\left(\frac{ed}{s}\right)
  =
  1+\log d-\log s
  \leqslant
  1+\log d,
\]
a simple sufficient condition when \(s\asymp\log n\) is
\[
  1+\log d
  \lesssim
  \frac{nB}{(\log n)^2}.
\]
In particular, polynomial ambient dimensions \(d=n^a\), with fixed
\(a>0\), are allowed here.  Unlike the ellipsoid regime of
Corollary~\ref{cor:regimes}, the intrinsic monotone model accommodates
polynomially large ambient dimension, since \(d\) enters only through
the support-selection term \(\log\binom ds\) and not through the Fourier
resolution, which is governed by \(s\).  Under such growth conditions,
the support-selection cost is no larger than the intrinsic rate
\(B/s\), and the unknown-support model has the same minimax rate as the
known-support model.

The estimator attaining the upper bound is a model-selection projection
estimator, described in the Supplementary Material. For each
candidate support of size \(s\), it computes a clipped empirical Fourier
projection on the corresponding \(s\)-dimensional subcube.  The truncation
degree is chosen at the intrinsic resolution scale, governed by \(s\)
rather than by the ambient dimension \(d\).  Since \(D^*(s,n)\asymp s\) in
the regime considered here, the projection part of the risk is of order
\(B/s\).  The final selection over supports contributes the additional term
\(\log\binom ds/n\).  Thus the intrinsic model separates two effects that
are confounded in the ambient problem: the statistical resolution of
Fourier--Walsh interactions on the active subcube and the combinatorial  
cost of finding that subcube.  The same combinatorial effect appears on
the computational side, where the estimator searches over all
\(\binom ds\) candidate supports and its running time grows
exponentially in the intrinsic dimension \(s\).

\section{Discussion and connections}\label{sec:discussion-related}

\paragraph{Intrinsic dimension.}
Throughout, the monotone results are formulated through the
Fourier--Sobolev energy \(I^{(2)}\), which controls the tail of the
Fourier--Walsh expansion, while monotonicity plays a complementary
geometric role by enabling the middle-layer packing in the lower
bounds.  The intrinsic monotone model separates the ambient dimension
\(d\) from the number \(s\) of active coordinates.  The full ellipsoid is governed by the
ambient resolution \(D^*(d,n)\).  By contrast, a monotone function depending
on at most \(s\) coordinates is governed by the resolution of an
\(s\)-dimensional subcube.  When the support is unknown, this intrinsic
rate is supplemented by the model-selection cost \(\log\binom ds/n\). Thus, 
in the logarithmic regime \(s\asymp\log n\), the leading term is
\(B/s\), while the additional term reflects the cost of identifying the
active coordinates.

\paragraph{Local entropy and convex constraints.}
General minimax theory based on local metric entropy gives powerful
characterizations for convex and star-shaped classes
\citep{yang1999,shrotriya2022lecam,neykov2023,prasadan2025reg,prasadan2025}.
Convex-constrained least squares has also been analyzed through Gaussian
processes and related local geometric quantities; see, for example,
\citet{chatterjee2014}.  The present setting is consistent with this
viewpoint, but the Fourier--Walsh basis makes the relevant geometry
explicit.  The class decomposes into Fourier layers of known cardinality,
and the effective dimension is determined by the number of coefficients
below the resolution boundary.  This leads to rates summarized by the
resolvable degree \(D^*(d,n)\).

\paragraph{Low-degree Boolean polynomial learning.}
There is also a related literature on learning Boolean functions and
polynomial surrogates from low-degree Fourier information.
\citet{eskenazis2022} show that bounded functions of bounded degree are
learnable in \(L_2\) from a number of random queries only logarithmic in
the dimension, and \citet{vandoornmalen2026} determine the sample
complexity of uniform \(L_\infty\) estimation for bounded low-degree and
sparse Fourier--Walsh polynomials.  Both fix the degree or sparsity in
advance.  Here, by contrast, the degree is selected by the sample size
through the resolution boundary, and the target class is a
Fourier--Sobolev ellipsoid whose tail is controlled by the soft budget
\(I^{(2)}\) rather than a hard degree cut-off.  A separate line of work
studies the recovery of Fourier-sparse set functions from random
evaluations \citep{stobbe2012}. There the structural assumption is exact
sparsity of the Fourier support and the goal is exact recovery, whereas
we impose a soft budget on the Fourier--Sobolev energy and characterize
the minimax estimation rate under noise.

\paragraph{Boolean Fourier analysis and high-order interactions.}
Our notation and basic facts follow the standard Fourier analysis of
Boolean functions \citep{odonnell2014}.  Classical results such as the KKL
theorem \citep{kahn1988} and the Benjamini--Kalai--Schramm
noise-sensitivity theorem \citep{bks1999} illustrate the central role of
Fourier mass and discrete derivatives in this theory.  The Fourier
coefficients on the Boolean cube may also be interpreted as interaction
coefficients.  This viewpoint is common in applications where
\(x\in\{0,1\}^d\) indexes combinations of binary factors and high-degree
Fourier terms represent high-order epistasis; see \citet{weinreich2013}.
The Walsh--Hadamard basis has also been generalized to non-uniform
probability measures on the hypercube, with connections to explainability
methods \citep{ferrere2025}.
From this perspective, \(D^*(d,n)\) is the highest interaction order whose
complete coefficient set can be estimated from \(n\) random samples.

\appendix
\section{Appendix: Main proofs}\label{sec:proofs}

\subsection{Proof of Proposition~\ref{prop:projection}}

Let \((X,Y)\) denote a generic observation distributed as each
\((X_j,Y_j)\).  Thus \(X\sim\mu\), \(Y=f(X)+\varepsilon\), and all
expectations involving \((X,Y)\) are taken with respect to this joint law.
Expectations involving \(\wt f_D\) are taken over the full sample.

Under the model \eqref{eq:model}, for every \(S\subseteq[d]\), the
empirical coefficient is unbiased:
\[
  \E\wt f(S)
  =
  \E\big[Y\chi_S(X)\big]
  =
  \E\big[f(X)\chi_S(X)\big]
  =
  \wh f(S).
\]
Moreover,
\[
  \Var(\wt f(S))
  =
  \frac{1}{n}\Var\big(Y\chi_S(X)\big)
  \leqslant
  \frac{1}{n}\E\big[Y^2\chi_S(X)^2\big]
  \leqslant
  \frac{V}{n}.
\]
Since the Walsh functions form an orthonormal basis of \(L^2(\Hd,\mu)\),
for each realization of the sample,
\[
  \norm{\wt f_D-f}_2^2
  =
  \sum_{|S|\leqslant D}\big(\wt f(S)-\wh f(S)\big)^2
  +
  \sum_{|S|>D}\wh f(S)^2.
\]
Taking expectation over the sample gives
\[
  \E\norm{\wt f_D-f}_2^2
  =
  \sum_{|S|\leqslant D}\E\big(\wt f(S)-\wh f(S)\big)^2
  +
  \sum_{|S|>D}\wh f(S)^2.
\]
The first term is bounded by \(VN(d,D)/n\), and the second by the
Fourier--Sobolev tail estimate \eqref{eq:tail-bound}.  This proves
\eqref{eq:projection-risk}.

If \(f\) takes values in \([0,1]\), then pointwise projection onto
\([0,1]\) is a contraction:
\[
  \big|\Pi_{[0,1]}(u)-f(x)\big|
  \leqslant
  |u-f(x)|
  \qquad
  \text{for all }u\in\mathbb R,\ x\in\Hd.
\]
Therefore clipping cannot increase the \(L^2(\mu)\) error, and the same
bound holds for \(\wh f_D=\Pi_{[0,1]}\wt f_D\).

It remains to justify the stated choices of \(V\).  On \(\cE_B\), one has
\(\wh f(\emptyset)=0\).  Hence, by orthonormality of the Walsh basis,
\[
  \E f(X)^2
  =
  \norm{f}_2^2
  =
  \sum_{S\ne\emptyset}\wh f(S)^2
  \leqslant
  \sum_{S\ne\emptyset}|S|\wh f(S)^2.
\]
By the definition of the Fourier--Sobolev energy,
\[
  \sum_{S\ne\emptyset}|S|\wh f(S)^2
  =
  \frac{I^{(2)}(f)}{4}
  \leqslant
  \frac{B}{4}.
\]
Thus \(\E f(X)^2\leqslant B/4\).  We also note that the sub-Gaussian
condition implies \(\E\varepsilon^2\leqslant\sigma^2\): since
\(\cosh x\geqslant1+x^2/2\) for all real \(x\), one has, for every
\(t\neq0\),
\[
  1+\frac{t^2\,\E\varepsilon^2}{2}
  \leqslant
  \frac{\E e^{t\varepsilon}+\E e^{-t\varepsilon}}{2}
  \leqslant
  e^{\sigma^2t^2/2}.
\]
Subtracting \(1\) and dividing by \(t^2/2\) on both sides gives
\(\E\varepsilon^2\leqslant\sigma^2(e^{u}-1)/u\) with
\(u=\sigma^2t^2/2\), and letting \(t\to0\) yields
\(\E\varepsilon^2\leqslant\sigma^2\).  Since \(Y=f(X)+\varepsilon\), with \(\varepsilon\)
independent of \(X\) and \(\E\varepsilon=0\), we have
\[
  \E Y^2
  =
  \E f(X)^2+\E\varepsilon^2
  \leqslant
  \frac{B}{4}+\sigma^2.
\]
Thus \(V=\sigma^2+B/4\) is valid on \(\cE_B\).  If
\(0\leqslant f\leqslant1\), then \(\E f(X)^2\leqslant1\), and therefore
\(V=\sigma^2+1\) is valid.

\subsection{Elementary tools}

Under Assumption~\ref{ass:regular}, the relevant degrees lie before the
middle of the binomial profile.  We shall use the following elementary
consequence of this fact.  The point is that, in this range, consecutive
Fourier layers grow at a controlled geometric rate.  As a result, the first
unresolved layer is comparable to the cumulative number of resolved
coefficients.

\begin{lemma}[Growth of Fourier layers]\label{lem:growth}
Let \(D\geqslant1\) be an integer, and suppose that
\[
  d\geqslant \kappa(D+1)
\]
for some fixed \(\kappa>2\).  Then \(D+1\leqslant d\), and there exist
constants \(a_\kappa>1\), \(c_\kappa>0\), and \(C_\kappa<\infty\),
depending only on \(\kappa\), such that the following hold.
\begin{enumerate}[label=(\roman*),leftmargin=2.2em]
\item For all \(1\leqslant k\leqslant D\),
\[
  N(d,k)\geqslant a_\kappa N(d,k-1).
\]

\item If \(m=D+1\), then
\[
  \sum_{j=0}^{m-1}\binom{d}{j}
  \leqslant
  C_\kappa \binom{d}{m}.
\]

\item Consequently, if \(D=D^*(d,n)\) and \(m=D+1\), then
\[
  \binom{d}{m}\geqslant c_\kappa n.
\]
\end{enumerate}
\end{lemma}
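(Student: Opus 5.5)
The whole proof rests on the ratio of consecutive binomial coefficients,
\[
  \frac{\binom{d}{k}}{\binom{d}{k-1}}=\frac{d-k+1}{k}.
\]
First, \(D+1\leqslant d\) is immediate from \(\kappa>2\) and \(D+1\geqslant 2\). Fix \(1\leqslant k\leqslant D+1\). The ratio is decreasing in \(k\), so it is smallest at \(k=D+1\). Using \(d\geqslant\kappa(D+1)\) there gives
\[
  \frac{d-k+1}{k}\geqslant \frac{d-D}{D+1}\geqslant \kappa-1+\frac{1}{D+1}>\kappa-1=:r_\kappa>1 .
\]
So \(\binom{d}{k}\geqslant r_\kappa\binom{d}{k-1}\) for every \(k\leqslant D+1\). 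The later steps are bookkeeping with this geometric growth.

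For (ii), take \(m=D+1\) and iterate backwards: \(\binom{d}{j}\leqslant r_\kappa^{-(m-j)}\binom{d}{m}\) for \(0\leqslant j\leqslant m\). Summing the geometric series gives
\[
  \sum_{j<m}\binom dj\leqslant \binom dm\sum_{\ell\geqslant1}r_\kappa^{-\ell}=\frac{1}{r_\kappa-1}\binom dm .
\]
Hence one may take \(C_\kappa=1/(\kappa-2)\).

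For (i), apply the same bound with \(m\) replaced by \(k\leqslant D\) (the ratio bound is valid there). This gives \(N(d,k-1)\leqslant \binom dk/(r_\kappa-1)\). Then
\[
  N(d,k)=N(d,k-1)+\binom dk\geqslant (1+(r_\kappa-1))N(d,k-1)=r_\kappa N(d,k-1).
\]
So \(a_\kappa=\kappa-1>1\) works.

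For (iii), let \(D=D^*(d,n)\). Part (ii) already requires \(D+1\leqslant d\), so \(D<d\). By maximality in the definition of \(D^*\), we then have \(N(d,D+1)>n\). By (ii),
\[
  N(d,D+1)=N(d,D)+\binom dm\leqslant (C_\kappa+1)\binom dm ,
\]
which gives \(\binom dm\geqslant n/(C_\kappa+1)\). Thus \(c_\kappa=(\kappa-2)/(\kappa-1)\).

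I don't expect a real obstacle. The only point needing care is the uniform lower bound on the ratio \((d-k+1)/k\) over the whole range \(k\leqslant D+1\). That bound is exactly what the hypothesis \(\kappa>2\) secures, since it makes \(r_\kappa>1\) and the geometric series converge.
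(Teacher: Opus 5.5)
Your proof is correct and follows the paper's argument. Both bound the consecutive binomial ratio uniformly for \(k\leqslant D+1\) using \(d\geqslant\kappa(D+1)\), sum the resulting geometric series to get (ii) and (i), and use maximality of \(D^*\) for (iii). The only difference is that you use the ratio bound \(\kappa-1\) directly instead of the paper's auxiliary \(q_\kappa\in(1/(\kappa-1),1)\), which gives explicit constants \(a_\kappa=\kappa-1\), \(C_\kappa=1/(\kappa-2)\) and \(c_\kappa=(\kappa-2)/(\kappa-1)\).
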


\begin{proof}
Since \(d\geqslant \kappa(D+1)\) and \(\kappa>2\), we have
\(D+1\leqslant d\).  Choose a constant \(q_\kappa\) such that
\[
  \frac{1}{\kappa-1}<q_\kappa<1.
\]
For every \(1\leqslant j\leqslant D+1\),
\[
  \frac{\binom d{j-1}}{\binom dj}
  =
  \frac{j}{d-j+1}
  \leqslant
  \frac{D+1}{d-D}
  \leqslant
  \frac{D+1}{\kappa(D+1)-D}
  \leqslant
  \frac{1}{\kappa-1}
  <q_\kappa .
\]
Thus the binomial layers are geometrically increasing up to level
\(D+1\).  In particular, for \(0\leqslant j<k\leqslant D+1\),
\[
  \binom dj
  \leqslant
  q_\kappa^{\,k-j}\binom dk .
\]

Taking \(1\leqslant k\leqslant D\) and summing over
\(0\leqslant j\leqslant k-1\), we obtain
\[
  N(d,k-1)
  =
  \sum_{j=0}^{k-1}\binom dj
  \leqslant
  \binom dk
  \sum_{r=1}^{k}q_\kappa^r
  \leqslant
  \frac{q_\kappa}{1-q_\kappa}\binom dk .
\]
Therefore
\[
  \binom dk
  \geqslant
  \frac{1-q_\kappa}{q_\kappa}N(d,k-1),
\]
and hence
\[
  N(d,k)
  =
  N(d,k-1)+\binom dk
  \geqslant
  \frac{1}{q_\kappa}N(d,k-1).
\]
This proves (i), with \(a_\kappa=q_\kappa^{-1}>1\).

The same geometric summation with \(k=m=D+1\) gives
\[
  \sum_{j=0}^{m-1}\binom dj
  \leqslant
  \frac{q_\kappa}{1-q_\kappa}\binom dm ,
\]
so (ii) holds with \(C_\kappa=q_\kappa/(1-q_\kappa)\).

Finally, suppose that \(D=D^*(d,n)\) and set \(m=D+1\).  Since
\(m\leqslant d\), maximality of \(D^*(d,n)\) gives
\[
  N(d,D)\leqslant n<N(d,m).
\]
On the other hand, (ii) implies
\[
  N(d,m)
  =
  \sum_{j=0}^{m}\binom dj
  \leqslant
  (1+C_\kappa)\binom dm .
\]
Therefore
\[
  \binom dm
  \geqslant
  \frac{1}{1+C_\kappa}\,n.
\]
This proves (iii), with \(c_\kappa=(1+C_\kappa)^{-1}\).
\end{proof}

The next lemma shows that the balanced truncation degree remains comparable
to the resolvable degree.

\begin{lemma}[Balanced truncation]\label{lem:balanced-truncation}
Let \(B>0\) and \(V>0\) be fixed, and let \(\kappa>2\).  There exists an
integer \(D_1\geqslant1\), depending only on \(B\), \(V\), and \(\kappa\), such that, 
for every pair \((d,n)\) satisfying
\(d\geqslant\kappa(D^*(d,n)+1)\) and
\(D^*(d,n)\geqslant D_1\), the degree
\(D_\sharp\) defined in \eqref{eq:Dsharp} is well defined and satisfies
\begin{equation}\label{eq:Dsharp-close}
  D_\sharp+1
  \geqslant
  \frac12 D^*(d,n).
\end{equation}
\end{lemma}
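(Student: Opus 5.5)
Write \(D^*=D^*(d,n)\) and \(r=\lceil D^*/2\rceil-1\), so that \(r+1\geqslant D^*/2\) and \(0\leqslant r\leqslant D^*\). We will show that \(D=r\) belongs to the set in \eqref{eq:Dsharp}, that is,
\[
  \frac{V N(d,r)}{n}\leqslant \frac{B}{4D^*}.
\]
This gives two things at once. The set is nonempty, so \(D_\sharp\) is well defined as the maximum of a finite nonempty set. And \(D_\sharp\geqslant r\), which yields \eqref{eq:Dsharp-close}.

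The key step is to compare \(N(d,r)\) with \(N(d,D^*)\leqslant n\) using the geometric growth of Lemma~\ref{lem:growth}(i). The hypothesis \(d\geqslant\kappa(D^*+1)\) lets us apply that lemma with \(D=D^*\), provided \(D^*\geqslant1\). We get \(N(d,k)\geqslant a_\kappa N(d,k-1)\) for \(1\leqslant k\leqslant D^*\). Iterating from \(k=r+1\) to \(D^*\) gives
\[
  N(d,r)\leqslant a_\kappa^{-(D^*-r)}N(d,D^*)\leqslant a_\kappa^{-(D^*-r)}\,n .
\]
Since \(D^*-r\geqslant D^*/2\), we obtain \(V N(d,r)/n\leqslant V a_\kappa^{-D^*/2}\).

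It remains to choose \(D_1\). We need \(V a_\kappa^{-D^*/2}\leqslant B/(4D^*)\), which is equivalent to
\[
  4VD^*a_\kappa^{-D^*/2}\leqslant B .
\]
Because \(a_\kappa>1\) depends only on \(\kappa\), the function \(t\mapsto 4Vt\,a_\kappa^{-t/2}\) tends to \(0\) as \(t\to\infty\). It is also eventually decreasing, for \(t\geqslant 2/\log a_\kappa\). So there is an integer \(D_1\geqslant1\), depending only on \(B\), \(V\) and \(\kappa\), such that the inequality holds for every integer \(D^*\geqslant D_1\). We take \(D_1\) larger than both the point where monotonicity starts and the point where the bound first holds, which makes the inequality valid uniformly beyond \(D_1\).

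There is no real obstacle here. The only points needing care are bookkeeping:
\begin{itemize}
\item iterating Lemma~\ref{lem:growth}(i) only over the range \(1\leqslant k\leqslant D^*\) where it holds;
\item the case where \(D^*\) is small or odd, which is handled by the ceiling in the definition of \(r\) and by \(D_1\geqslant1\);
\item ensuring \(D_1\) does not depend on \(d\) or \(n\), which holds because \(a_\kappa\) depends on \(\kappa\) alone.
\end{itemize}
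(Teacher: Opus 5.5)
Your proof is correct and follows essentially the same route as the paper: both iterate Lemma~\ref{lem:growth}(i) downward from \(D^*\), using \(N(d,D^*)\leqslant n\), to exhibit an admissible degree in \eqref{eq:Dsharp} that is at least \(D^*/2-1\), with \(D_1\) chosen so that the geometric factor beats \(B/(4VD^*)\). The only difference is how the gap is set. After a case split, the paper takes \(r=\lceil\log(1/\tau)/\log a_\kappa\rceil=O(\log D^*)\) with \(\tau=B/(4VD^*)\), which incidentally gives the sharper \(D_\sharp\geqslant D^*-O(\log D^*)\). You instead fix the gap at roughly \(D^*/2\) and require \(4VD^*a_\kappa^{-D^*/2}\leqslant B\), which avoids the case split.
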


\begin{proof}
Write \(D^*=D^*(d,n)\geqslant1\) and
\[
  \tau=\frac{B}{4VD^*}.
\]

If
\[
  N(d,D^*)\leqslant \tau n,
\]
then \(D^*\) belongs to the admissible set in the definition of
\(D_\sharp\).  Hence \(D_\sharp=D^*\), and there is nothing to prove.

We may therefore assume that
\[
  N(d,D^*)>\tau n.
\]
In particular, \(\tau<1\), since \(N(d,D^*)\leqslant n\) by definition of
\(D^*\).  Choose
\[
  r=
  \left\lceil
  \frac{\log(1/\tau)}{\log a_\kappa}
  \right\rceil,
\]
where \(a_\kappa>1\) is the constant in Lemma~\ref{lem:growth}.  Since
\[
  \log(1/\tau)
  =
  \log D^*+\log(4V/B),
\]
we have
\[
  r\leqslant 1+\frac{\log D^*+\log(4V/B)}{\log a_\kappa}.
\]
Since the right-hand side grows only logarithmically in \(D^*\), there
exists an integer \(D_1\), depending only on \(B\), \(V\), and
\(\kappa\), such that \(r\leqslant D^*/2\) whenever
\(D^*\geqslant D_1\).  In particular,
\[
  1\leqslant D^*-r+1\leqslant D^*.
\]

We then apply Lemma~\ref{lem:growth}(i) successively with
\[
  k=D^*-r+1,\ldots,D^*.
\]
This gives
\[
  N(d,D^*-r)
  \leqslant
  a_\kappa^{-r}N(d,D^*).
\]
By the definition of \(r\), \(a_\kappa^{-r}\leqslant \tau\), and by the
definition of \(D^*\), \(N(d,D^*)\leqslant n\).  Hence
\[
  N(d,D^*-r)
  \leqslant
  \tau n.
\]
Thus \(D^*-r\) belongs to the admissible set in the definition of
\(D_\sharp\), and therefore
\[
  D_\sharp\geqslant D^*-r.
\]
Consequently,
\[
  D_\sharp+1
  \geqslant
  D^*-r+1
  \geqslant
  \frac{D^*}{2}.
\]
This proves \eqref{eq:Dsharp-close}.
\end{proof}

We also record the form of Fano's inequality used in the lower bounds.

\begin{lemma}[Fano reduction]\label{lem:fano}
Let \(\rho\) be a metric and let \(\{f_\theta:\theta\in\Theta\}\) be a
finite family.  Suppose that there exists \(\theta_0\in\Theta\) such that,
with
\[
  \Theta_1=\Theta\setminus\{\theta_0\},
  \qquad
  M=|\Theta_1|,
\]
one has \(M\geqslant2\) and
\[
  \rho(f_\theta,f_{\theta'})
  \geqslant
  2\delta
  \qquad
  \text{for all distinct }\theta,\theta'\in\Theta_1.
\]
Assume that
\begin{equation}\label{eq:fano-base}
  \frac{1}{M}\sum_{\theta\in\Theta_1}
  \KL(P_\theta,P_{\theta_0})
  \leqslant
  \alpha \log M
\end{equation}
for some \(0<\alpha<1\).  Then
\begin{equation}\label{eq:fano-conclusion}
  \inf_{\tilde f}\sup_{\theta\in\Theta}
  \E_\theta\rho^2(\tilde f,f_\theta)
  \geqslant
  \delta^2
  \left(
    1-\alpha-\frac{\log 2}{\log M}
  \right).
\end{equation}
In particular, if
\[
  1-\alpha-\frac{\log 2}{\log M}>0,
\]
then the minimax risk over the family is bounded below by a positive
constant multiple of \(\delta^2\).

A sufficient condition for \eqref{eq:fano-base} is the pairwise bound
\begin{equation}\label{eq:fano-pair}
  \sup_{\substack{\theta,\theta'\in\Theta\\ \theta\ne\theta'}}
  \KL(P_\theta,P_{\theta'})
  \leqslant
  \alpha \log M .
\end{equation}
\end{lemma}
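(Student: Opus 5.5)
We derive the Fano reduction from the standard multiple-testing reduction combined with Fano's inequality, applied only to the hypotheses indexed by \(\Theta_1\).

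\textbf{Step 1: restrict and reduce to testing.} Since the supremum over \(\Theta\) dominates the supremum over \(\Theta_1\), it suffices to bound \(\inf_{\tilde f}\max_{\theta\in\Theta_1}\E_\theta\rho^2(\tilde f,f_\theta)\) from below. Given any estimator \(\tilde f\), define the minimum-distance test \(\psi=\arg\min_{\theta\in\Theta_1}\rho(\tilde f,f_\theta)\), breaking ties arbitrarily.

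Suppose \(\psi\neq\theta\). Then \(\rho(\tilde f,f_\psi)\leqslant\rho(\tilde f,f_\theta)\), so by the triangle inequality
\[
2\delta\leqslant\rho(f_\psi,f_\theta)\leqslant 2\rho(\tilde f,f_\theta).
\]
Hence \(\rho(\tilde f,f_\theta)\geqslant\delta\) on the event \(\{\psi\neq\theta\}\). Markov's inequality then gives
\[
\E_\theta\rho^2(\tilde f,f_\theta)\geqslant\delta^2P_\theta(\psi\neq\theta).
\]
Bounding the maximum by the average, we obtain
\[
\max_{\theta\in\Theta_1}\E_\theta\rho^2(\tilde f,f_\theta)\geqslant\delta^2\,\frac1M\sum_{\theta\in\Theta_1}P_\theta(\psi\neq\theta).
\]

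\textbf{Step 2: Fano's inequality.} Put the uniform prior on \(\Theta_1\). Fano's inequality gives, for any test \(\psi\),
\[
\frac1M\sum_{\theta\in\Theta_1}P_\theta(\psi\neq\theta)\geqslant 1-\frac{I(\theta;Z)+\log2}{\log M},
\]
where \(Z\) denotes the sample and \(I(\theta;Z)\) the mutual information under the uniform prior.

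The mutual information is bounded through the variational (golden-formula) identity: it equals \(\min_Q \frac1M\sum_{\theta\in\Theta_1}\KL(P_\theta,Q)\), with the minimum attained at the mixture \(Q=\bar P\). Taking the particular choice \(Q=P_{\theta_0}\) gives
\[
I(\theta;Z)\leqslant\frac1M\sum_{\theta\in\Theta_1}\KL(P_\theta,P_{\theta_0})\leqslant\alpha\log M
\]
by \eqref{eq:fano-base}. Substituting this into the Fano bound and combining with Step 1 yields \eqref{eq:fano-conclusion}. The statement that the risk is a positive multiple of \(\delta^2\) is then immediate.

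\textbf{Step 3: the pairwise sufficient condition.} Assume \eqref{eq:fano-pair}. Every \(\theta\in\Theta_1\) differs from \(\theta_0\), so each term satisfies \(\KL(P_\theta,P_{\theta_0})\leqslant\alpha\log M\). Averaging over \(\theta\in\Theta_1\) gives \eqref{eq:fano-base}.

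The only step requiring care is the mutual-information bound in Step 2. It uses the reference measure \(P_{\theta_0}\), which lies outside \(\Theta_1\), instead of the mixture \(\bar P\). This is legitimate because \(\KL(P_\theta,Q)\) is minimized on average at \(Q=\bar P\), as the identity
\[
\frac1M\sum_{\theta\in\Theta_1}\KL(P_\theta,Q)=\frac1M\sum_{\theta\in\Theta_1}\KL(P_\theta,\bar P)+\KL(\bar P,Q)
\]
shows. All remaining steps are routine.
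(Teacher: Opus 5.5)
Your proof is correct and follows the paper's argument essentially step by step: restriction to \(\Theta_1\), the minimum-distance test combined with the triangle inequality and Markov's inequality, Fano's inequality under the uniform prior on \(\Theta_1\), and replacement of the mixture \(\bar P\) by \(P_{\theta_0}\) via the identity \(\frac1M\sum_{\theta\in\Theta_1}\KL(P_\theta,Q)=\frac1M\sum_{\theta\in\Theta_1}\KL(P_\theta,\bar P)+\KL(\bar P,Q)\). The differences are only cosmetic: you phrase the testing reduction contrapositively and bound the maximal risk by the average testing error directly, whereas the paper passes through the maximal testing error first.
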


\begin{proof}
Since the supremum over \(\Theta\) is at least the supremum over
\(\Theta_1\),
\[
  \inf_{\tilde f}\sup_{\theta\in\Theta}
  \E_\theta\rho^2(\tilde f,f_\theta)
  \geqslant
  \inf_{\tilde f}\sup_{\theta\in\Theta_1}
  \E_\theta\rho^2(\tilde f,f_\theta).
\]
The family indexed by \(\Theta_1\) is \(2\delta\)-separated in the metric
\(\rho\).  We first reduce estimation to testing.  Fix an estimator
\(\tilde f\) and let
\(\psi\in\arg\min_{\theta\in\Theta_1}\rho(\tilde f,f_\theta)\)
be a minimum-distance test, with ties broken arbitrarily.  If
\(\rho(\tilde f,f_\theta)<\delta\) for the true
\(\theta\in\Theta_1\), then, for every
\(\theta'\in\Theta_1\setminus\{\theta\}\), the triangle inequality
gives
\[
  \rho(\tilde f,f_{\theta'})
  \geqslant
  \rho(f_\theta,f_{\theta'})-\rho(\tilde f,f_\theta)
  >
  2\delta-\delta
  =
  \delta
  >
  \rho(\tilde f,f_\theta),
\]
so that \(\psi=\theta\).  Hence, for every \(\theta\in\Theta_1\),
\(P_\theta(\psi\neq\theta)\leqslant
P_\theta(\rho(\tilde f,f_\theta)\geqslant\delta)\), and, by Markov's
inequality,
\[
  \sup_{\theta\in\Theta_1}
  \E_\theta\rho^2(\tilde f,f_\theta)
  \geqslant
  \delta^2
  \max_{\theta\in\Theta_1}
  P_\theta(\psi\neq\theta).
\]
Next, we lower bound the testing error. Let
\(\bar P=M^{-1}\sum_{\theta\in\Theta_1}P_\theta\) and
\(\mathcal I=M^{-1}\sum_{\theta\in\Theta_1}\KL(P_\theta,\bar P)\).
Fano's inequality \citep[Lemma~2.10]{tsybakov2009}, applied to a uniform
prior on \(\Theta_1\), gives, for every test \(\psi\) with values in
\(\Theta_1\),
\[
  \max_{\theta\in\Theta_1}P_\theta(\psi\neq\theta)
  \geqslant
  \frac1M\sum_{\theta\in\Theta_1}P_\theta(\psi\neq\theta)
  \geqslant
  1-\frac{\mathcal I+\log2}{\log M}.
\]
Moreover, for every probability measure \(Q\),
\[
  \frac1M\sum_{\theta\in\Theta_1}\KL(P_\theta,Q)
  =
  \mathcal I+\KL(\bar P,Q)
  \geqslant
  \mathcal I,
\]
an identity that follows directly from the chain rule for
relative entropy. Choosing
\(Q=P_{\theta_0}\) and invoking \eqref{eq:fano-base} yields
\(\mathcal I\leqslant\alpha\log M\).  Combining the last three
displays, which hold for every estimator \(\tilde f\), and taking the
infimum over \(\tilde f\), proves \eqref{eq:fano-conclusion}.

Finally, if \eqref{eq:fano-pair} holds, then for any fixed
\(\theta_0\in\Theta\),
\[
  \frac{1}{M}\sum_{\theta\in\Theta_1}
  \KL(P_\theta,P_{\theta_0})
  \leqslant
  \alpha\log M,
\]
which is precisely \eqref{eq:fano-base}.
\end{proof}

\subsection{Proof of Theorem~\ref{thm:ellipsoid}}

We prove the lower bound by constructing a packing supported on a single
Fourier layer.  Put
\[
  D^*=D^*(d,n),
  \qquad
  m=D^*+1,
  \qquad
  \mathcal L_m=\{S\subseteq[d]: |S|=m\},
  \qquad
  L_m=|\mathcal L_m|=\binom{d}{m}.
\]
Since \(D^*(d,n)\geqslant D_0\geqslant1\) and
\(d\geqslant\kappa(D^*(d,n)+1)\), Lemma~\ref{lem:growth}(iii)
applies with \(D=D^*(d,n)\), and gives \(L_m\geqslant c_\kappa n\).

We now construct a finite family of functions supported on the single
Fourier layer \(\mathcal L_m\).  An element of
\(\{-1,1\}^{\mathcal L_m}\) is a choice of one sign for each set
\(S\in\mathcal L_m\).  Thus, if
\(\alpha\in\{-1,1\}^{\mathcal L_m}\), we write
\[
  \alpha=(\alpha_S)_{S\in\mathcal L_m},
  \qquad
  \alpha_S\in\{-1,1\}.
\]
We equip this hypercube of sign vectors with the Hamming distance
\[
  d_H(\alpha,\beta)
  =
  \big|\{S\in\mathcal L_m:\alpha_S\ne\beta_S\}\big|.
\]
By the Varshamov--Gilbert bound \citep[Lemma~4.7]{massart2007}, there
exist universal constants \(c_1>0\) and \(c_2>0\), and a subset
\(\mathcal A\subset\{-1,1\}^{\mathcal L_m}\), such that
\[
  \log |\mathcal A|\geqslant c_1 L_m
\]
and
\[
  d_H(\alpha,\beta)\geqslant c_2L_m
  \qquad
  \text{for all distinct }\alpha,\beta\in\mathcal A.
\]
Thus \(\mathcal A\) contains exponentially many sign patterns on
\(\mathcal L_m\), any two of which differ on a fixed positive fraction of
the coordinates.

For \(a>0\) and \(\alpha\in\mathcal A\), define
\[
  f_\alpha(x)
  =
  a\sum_{S\in\mathcal L_m}\alpha_S\chi_S(x).
\]
Then the only nonzero Fourier coefficients of \(f_\alpha\) are those on
the layer \(\mathcal L_m\), namely
\[
  \wh f_\alpha(S)
  =
  \begin{cases}
    a\alpha_S, & S\in\mathcal L_m,\\
    0, & S\notin\mathcal L_m.
  \end{cases}
\]
In particular, since \(m\geqslant1\), its constant coefficient is zero:
\[
  \wh f_\alpha(\emptyset)=0.
\]
Choose
\[
  a^2=\frac{B}{4mL_m}.
\]
Using the Fourier representation of the Fourier--Sobolev energy,
\[
  I^{(2)}(f)
  =
  4\sum_{S\ne\emptyset}|S|\wh f(S)^2,
\]
we obtain
\[
  I^{(2)}(f_\alpha)
  =
  4\sum_{S\in\mathcal L_m}m\,a^2
  =
  4m a^2 L_m
  =
  B.
\]
Therefore \(f_\alpha\in\cE_B\).  The zero function also belongs to
\(\cE_B\).

The set \(\mathcal A\) is used in two ways.  Its separation in Hamming
distance gives separation in \(L^2(\mu)\), while its cardinality will make
the Kullback--Leibler divergence small compared with \(\log|\mathcal A|\).

\emph{Separation.}  If \(\alpha,\beta\in\mathcal A\) are distinct, then
orthonormality of the Walsh basis gives
\begin{align*}
  \norm{f_\alpha-f_\beta}_2^2
  &=
  a^2\sum_{S\in\mathcal L_m}(\alpha_S-\beta_S)^2 \\
  &=
  4a^2 d_H(\alpha,\beta) \\
  &\geqslant
  4c_2 a^2L_m
  =
  c_2\,\frac{B}{m}.
\end{align*}
Thus the family \(\{f_\alpha:\alpha\in\mathcal A\}\) is \(2\delta\)-separated
in \(L^2(\mu)\), with
\[
  \delta^2=\frac{c_2B}{4m}.
\]
Moreover, relative to the zero function,
\[
  \norm{f_\alpha}_2^2
  =
  a^2L_m
  =
  \frac{B}{4m}.
\]

\emph{Kullback--Leibler control.}  For the lower bound it is enough to
consider the Gaussian noise submodel with variance \(\sigma^2\).  This
submodel satisfies the noise assumption, and any lower bound obtained for
this admissible experiment is also a lower bound for the minimax risk
under the stated model.

In this Gaussian submodel, with common random design distribution \(\mu\),
the Kullback--Leibler divergence from the model with regression function
\(0\) is
\[
  \KL(P_\alpha,P_0)
  =
  \frac{n}{2\sigma^2}\norm{f_\alpha}_2^2
  =
  \frac{nB}{8\sigma^2 m}.
\]
On the other hand,
\[
  \log |\mathcal A|
  \geqslant
  c_1L_m
  \geqslant
  c_1c_\kappa n.
\]
Hence, uniformly over \(\alpha\in\mathcal A\),
\[
  \frac{\KL(P_\alpha,P_0)}{\log |\mathcal A|}
  \leqslant
  \frac{B}{8\sigma^2 c_1c_\kappa}\,\frac{1}{m}.
\]
The right-hand side is at most \(1/4\) as soon as
\(m\geqslant B/(2\sigma^2c_1c_\kappa)\).  Hence, whenever
\(D^*(d,n)\geqslant D_0\), with \(D_0\) depending only on \(B\),
\(\sigma^2\), and \(\kappa\),
\[
  \frac{1}{|\mathcal A|}
  \sum_{\alpha\in\mathcal A}
  \KL(P_\alpha,P_0)
  \leqslant
  \frac14\log |\mathcal A|.
\]
This is condition \eqref{eq:fano-base} in Lemma~\ref{lem:fano}, with the
zero function as reference point.

Applying Lemma~\ref{lem:fano} to the family
\[
  \{0\}\cup\{f_\alpha:\alpha\in\mathcal A\},
\]
we obtain
\[
  \inf_{\wh f}\sup_{f\in\cE_B}R(\wh f,f)
  \geqslant
  \delta^2
  \left(
    1-\frac14-\frac{\log 2}{\log|\mathcal A|}
  \right).
\]
Moreover, by the definition of \(D^*(d,n)\) and the assumption
\(d\geqslant\kappa(D^*(d,n)+1)\),
\[
  n\geqslant N(d,D^*)\geqslant\binom{d}{D^*}
  \geqslant\Bigl(\frac{d}{D^*}\Bigr)^{D^*}
  \geqslant\kappa^{D^*},
\]
so that \(\log|\mathcal A|\geqslant c_1c_\kappa\,\kappa^{D^*}\).  Hence,
after increasing \(D_0\) if necessary, in a way that depends only on
\(\kappa\), the factor in parentheses is bounded from below by \(1/2\)
whenever \(D^*(d,n)\geqslant D_0\).  Using \(\delta^2=c_2B/(4m)\), we get
\[
  \inf_{\wh f}\sup_{f\in\cE_B}R(\wh f,f)
  \geqslant
  c\,\frac{B}{m},
\]
where \(c>0\) is a universal constant.

Finally, since \(m=D^*(d,n)+1\) and \(D^*(d,n)\geqslant D_0\geqslant1\),
we have \(m\leqslant 2D^*(d,n)\).  Consequently,
\[
  \inf_{\wh f}\sup_{f\in\cE_B}R(\wh f,f)
  \geqslant
  c'\,\frac{B}{D^*(d,n)}
\]
for another universal constant \(c'>0\). This proves
the lower bound in Theorem~\ref{thm:ellipsoid}.  Enlarging \(D_0\) once
more if necessary so that \(D_0\geqslant D_1\), where \(D_1\) is the
threshold of Lemma~\ref{lem:balanced-truncation} applied with
\(V=\sigma^2+B/4\), the upper bound holds as well.  Since this choice of
\(V\) is a function of \(B\) and \(\sigma^2\) only, the enlarged
\(D_0\) still depends only on \(B\), \(\sigma^2\), and \(\kappa\).  This
completes the proof of Theorem~\ref{thm:ellipsoid}.

\begin{remark}
The lower-bound construction uses only the first unresolved Fourier layer.
The regular-resolution assumption ensures that this layer is large enough:
by Lemma~\ref{lem:growth}(iii),
\[
  \binom{d}{D^*(d,n)+1}\geqslant c_\kappa n .
\]
Thus the layer immediately beyond the resolution boundary already contains
at least a constant multiple of \(n\) orthogonal Fourier directions.  This
is enough to support a Fano packing, and is the mechanism behind the lower
bound.
\end{remark}

\subsection{Proof of Corollary~\ref{cor:regimes}}

Let
\[
  r=\frac{\log n}{\log d}.
\]
Under the assumptions of the corollary, \(r\to\infty\), \(r=o(d)\), and
\(\log r=o(\log d)\).

We first prove the lower bound on \(D^*(d,n)\).  Let \(0<c<1\) and set
\(D=\lfloor cr\rfloor\).  Since \(D=o(d)\), the binomial coefficients
\(\binom dk\) are increasing for \(0\leqslant k\leqslant D\), for all
sufficiently large \(n\).  Hence
\[
  N(d,D)
  =
  \sum_{k=0}^D\binom dk
  \leqslant
  (D+1)\binom dD
  \leqslant
  (D+1)\left(\frac{ed}{D}\right)^D .
\]
Taking logarithms gives
\[
  \log N(d,D)
  \leqslant
  \log(D+1)+D\log\left(\frac{ed}{D}\right).
\]
Since \(D\asymp r\) and \(\log r=o(\log d)\), we have
\[
  \log D=O(\log r)=o(\log d),
\]
and therefore
\[
  \log\left(\frac{ed}{D}\right)
  =
  (1+o(1))\log d.
\]
Also,
\[
  \log(D+1)=o(D\log d).
\]
Using \(D=\lfloor cr\rfloor=(c+o(1))r\), we obtain
\[
  \log N(d,D)
  \leqslant
  (c+o(1))\,r\log d
  =
  (c+o(1))\log n .
\]
Choosing \(c<1\) fixed, this implies \(N(d,D)\leqslant n\) for all
sufficiently large \(n\).  Hence \(D\leqslant D^*(d,n)\), and therefore
\[
  D^*(d,n)\gtrsim r .
\]

We now prove the upper bound on \(D^*(d,n)\).  Let \(C>1\) and set
\(D=\lfloor Cr\rfloor\).  Again \(D=o(d)\).  Since
\[
  N(d,D)\geqslant \binom dD
  \geqslant
  \left(\frac dD\right)^D,
\]
we get
\[
  \log N(d,D)
  \geqslant
  D\log\left(\frac dD\right)
  =
  D\log d-D\log D .
\]
Because \(D=\lfloor Cr\rfloor=(C+o(1))r\) and
\(\log D=o(\log d)\), this yields
\[
  \log N(d,D)
  \geqslant
  (C+o(1))\,r\log d
  =
  (C+o(1))\log n .
\]
Choosing \(C>1\) fixed, this implies \(N(d,D)>n\) for all sufficiently large
\(n\).  Hence \(D>D^*(d,n)\), and therefore
\[
  D^*(d,n)\lesssim r .
\]
Combining the two bounds gives
\[
  D^*(d,n)\asymp r
  =
  \frac{\log n}{\log d}.
\]
Since \(\log d=(\log n)^b\), we conclude that
\[
  D^*(d,n)\asymp(\log n)^{1-b}.
\]
The regular-window condition follows because \(D^*(d,n)\to\infty\) and
\(D^*(d,n)=o(d)\).  The rate statement follows from
Theorem~\ref{thm:ellipsoid}.

\clearpage
\section{Supplementary Material}
\label{sec:suppl}

\subsection{Proof of Theorem~\ref{thm:known-support}}

The upper bound is a direct
projection argument on the known cube.  The lower bound uses a monotone
packing on the middle layer of \(\Hs\).

\begin{lemma}[Resolvable degree on a logarithmic cube]\label{lem:known-dstar}
Assume that, for some fixed constants \(\varepsilon>0\) and
\(C_0<\infty\),
\[
  (1+\varepsilon)\log_2 n
  \leqslant
  s
  \leqslant
  C_0\log_2 n.
\]
Then, for all sufficiently large \(n\), the pair \((d,n)=(s,n)\) satisfies
the inequalities in Assumption~\ref{ass:regular}, with constants depending
only on \(\varepsilon\) and \(C_0\). Moreover,
\[
  D^*(s,n)\asymp s\asymp \log n.
\]
\end{lemma}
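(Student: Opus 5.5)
Write $D^*=D^*(s,n)$. The plan is to show that $c_1 s\leqslant D^*\leqslant s/\kappa-1$ for some fixed $\kappa>2$ and some $c_1>0$, both depending only on $\varepsilon$ and $C_0$. Together with $s\asymp\log n$, which is immediate from the window, this gives both inequalities of Assumption~\ref{ass:regular}: first $D^*\to\infty$, then $s\geqslant\kappa(D^*+1)$. The natural tool is the entropy bound for binomial sums,
\[
  N(s,\lambda s)\leqslant 2^{sH(\lambda)}
  \quad(\lambda\leqslant 1/2),
  \qquad
  \binom{s}{\lambda s}\geqslant \frac{2^{sH(\lambda)}}{s+1},
\]
where $H$ is the binary entropy. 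Because $s\asymp\log_2 n$, the condition $N(s,D)\leqslant n$ becomes, up to lower-order terms, $H(D/s)\lesssim \log_2 n/s$. Here $\log_2 n/s$ lies in $[1/C_0,\,1/(1+\varepsilon)]$.

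\emph{Upper bound (regular window).} Choose $\lambda_0<1/2$ such that $H(\lambda_0)=1/(1+\varepsilon/2)$. This is possible because the right-hand side is strictly less than $1$; $\lambda_0$ depends only on $\varepsilon$. Set $D=\lceil\lambda_0 s\rceil$. Then
\[
  N(s,D)\geqslant\binom sD\geqslant 2^{sH(\lambda_0)+O(1)}/(s+1)
  \geqslant n^{(1+\varepsilon)/(1+\varepsilon/2)}/(C\log n),
\]
which exceeds $n$ for large $n$. The $O(1)$ accounts for the ceiling and the continuity of $H$. Hence $D^*<\lambda_0 s+1$. Any fixed $\kappa$ with $2<\kappa<1/\lambda_0$ then satisfies $\kappa(D^*+1)\leqslant\kappa(\lambda_0 s+2)\leqslant s$ for large $s$. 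In that case $s\geqslant\kappa(D^*+1)$, and $\kappa$ depends only on $\varepsilon$.

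\emph{Lower bound.} Choose $\lambda_1\in(0,1/2)$ such that $H(\lambda_1)\leqslant 1/(2C_0)$; $\lambda_1$ depends only on $C_0$. Set $D=\lfloor\lambda_1 s\rfloor$. Then $N(s,D)\leqslant 2^{sH(\lambda_1)}\leqslant 2^{C_0\log_2 n/(2C_0)}=\sqrt n\leqslant n$, so $D^*\geqslant\lfloor\lambda_1 s\rfloor\gtrsim s\to\infty$. Combining the two bounds gives $\lambda_1 s-1\leqslant D^*\leqslant\lambda_0 s$, so $D^*\asymp s\asymp\log n$.

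The main obstacle is the upper bound. It must produce a constant $\kappa$ strictly larger than $2$, not merely $D^*\leqslant s/2$. This is exactly where the gap $\varepsilon>0$ in the lower end of the window enters: without it, $D^*$ could approach the middle layer. The only care needed is to keep the polynomial factors $(s+1)$ and the rounding negligible against the margin $n^{\varepsilon'}$.
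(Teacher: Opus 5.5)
Your proposal is correct and follows the paper's proof essentially step for step. Both arguments obtain $D^*\geqslant\lfloor\lambda_1 s\rfloor$ from the entropy bound on binomial sums, using the constraint $s\leqslant C_0\log_2 n$. Both then show that a layer $\binom{s}{\approx\lambda_0 s}$ with $H(\lambda_0)>1/(1+\varepsilon)$ already exceeds $n$, and pick $\kappa\in(2,1/\lambda_0)$. The only difference is that you use the method-of-types bound $\binom sk\geqslant 2^{sH(k/s)}/(s+1)$ where the paper uses the Stirling-based $c_\rho s^{-1/2}2^{sH_2(\rho)}$; this is immaterial, since the margin $n^{\varepsilon'}$ absorbs any polynomial factor in $s\asymp\log n$.
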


\begin{proof}
Let
\[
  H_2(u)=-u\log_2 u-(1-u)\log_2(1-u)
\]
be the binary entropy function.

We first prove a lower bound on \(D^*(s,n)\).  Choose
\(\eta\in(0,1/2)\) such that
\[
  C_0H_2(\eta)<1.
\]
By the entropy bound for binomial coefficients
(Lemma~\ref{lem:binomial-entropy}),
\[
  \sum_{k=0}^{\lfloor\eta s\rfloor}\binom{s}{k}
  \leqslant
  2^{sH_2(\eta)}.
\]
Since \(s\leqslant C_0\log_2 n\), the right-hand side is bounded by
\[
  n^{C_0H_2(\eta)}=o(n).
\]
Thus, for all sufficiently large \(n\),
\[
  N(s,\lfloor\eta s\rfloor)
  \leqslant n,
\]
and hence
\[
  D^*(s,n)\geqslant \lfloor\eta s\rfloor.
\]
Since \(s\to\infty\), after decreasing \(\eta\) and relabeling the resulting
constant, we obtain
\[
  D^*(s,n)\geqslant \eta s
\]
for all sufficiently large \(n\).

We now prove that \(D^*(s,n)\) remains below a fixed fraction of \(s\),
strictly smaller than \(1/2\).  Since \(\varepsilon>0\), we can choose
\(\rho\in(0,1/2)\) such that
\[
  H_2(\rho)>\frac{1}{1+\varepsilon}.
\]
Since
\[
  s\geqslant(1+\varepsilon)\log_2 n,
\]
we have \(s\to\infty\).  Therefore, by the second bound in
Lemma~\ref{lem:binomial-entropy}, there exists \(c_\rho>0\) such that,
for all sufficiently large \(n\),
\[
  \binom{s}{\lfloor\rho s\rfloor}
  \geqslant
  c_\rho s^{-1/2}2^{sH_2(\rho)}.
\]
On the other hand,
\[
  s\geqslant (1+\varepsilon)\log_2 n
  \qquad\Longrightarrow\qquad
  n\leqslant 2^{s/(1+\varepsilon)}.
\]
Hence
\[
  \frac{\binom{s}{\lfloor\rho s\rfloor}}{n}
  \geqslant
  c_\rho s^{-1/2}
  2^{s\left(H_2(\rho)-1/(1+\varepsilon)\right)}
  \longrightarrow\infty .
\]
Thus, for all sufficiently large \(n\),
\[
  \binom{s}{\lfloor\rho s\rfloor}>n .
\]
Therefore
\[
  N(s,\lfloor\rho s\rfloor)>n,
\]
and by the definition of the resolvable degree,
\[
  D^*(s,n)<\lfloor\rho s\rfloor.
\]
Since \(\rho<1/2\), choose \(\kappa\) such that
\[
  2<\kappa<\frac1\rho .
\]
Since \(D^*(s,n)\) is integer,
\[
  D^*(s,n)+1
  \leqslant
  \lfloor\rho s\rfloor
  \leqslant
  \rho s
\]
for all sufficiently large \(n\).  Therefore
\[
  \kappa\bigl(D^*(s,n)+1\bigr)
  \leqslant
  \kappa\rho s
  <s.
\]
Thus the inequalities in Assumption~\ref{ass:regular} hold on the
\(s\)-dimensional cube, with constants depending only on
\(\varepsilon\) and \(C_0\).

Finally, for all sufficiently large \(n\),
\[
  \eta s
  \leqslant
  D^*(s,n)
  \leqslant
  \rho s,
\]
and hence
\[
  D^*(s,n)\asymp s.
\]
Since
\[
  (1+\varepsilon)\log_2 n
  \leqslant s
  \leqslant
  C_0\log_2 n,
\]
we also have \(s\asymp\log n\).  Therefore
\[
  D^*(s,n)\asymp s\asymp\log n.
\]
\end{proof}

We now prove the upper bound in Theorem~\ref{thm:known-support}.  By
Lemma~\ref{lem:known-dstar}, Assumption~\ref{ass:regular} holds on the
\(s\)-dimensional cube and
\[
  D^*(s,n)\asymp s .
\]
Every \(g\in\cG_{s,B}\) takes values in \([0,1]\), so the moment bound in
Proposition~\ref{prop:projection} holds with \(V=\sigma^2+1\) (unlike the centered
ellipsoid, where \(V=\sigma^2+B/4\) follows from \(\wh f(\emptyset)=0\),
here the bound \(\E g(X)^2\leqslant1\) comes directly from
\(0\leqslant g\leqslant1\)).  Moreover,
\(I^{(2)}(g)\leqslant B\).  Applying the upper-bound argument of Theorem~\ref{thm:ellipsoid} on
\(\Hs\), with \(d=s\) and \(V=\sigma^2+1\), gives
\[
  \sup_{g\in\cG_{s,B}}R(\wh g_{D_\sharp},g)
  \leqslant
  C\,\frac{B}{D^*(s,n)}
  \leqslant
  C'\,\frac{B}{s},
\]
where \(C'\) depends only on \(\varepsilon\) and \(C_0\).  This proves the 
upper bound.

It remains to prove the lower bound.  For \(z\in\Hs\), write $|z|=\sum_{i=1}^s z_i$
for its Hamming weight.  Put \(m=\lfloor s/2\rfloor\), and let
\[
  \Lambda_m=\{z\in\Hs: |z|=m\},
  \qquad
  M_m=|\Lambda_m|=\binom{s}{m}.
\]
By Stirling's formula, there exist universal constants
\(0<c_M<C_M<\infty\) such that, for all \(s\geqslant1\),
\[
  c_M\,\frac{2^s}{\sqrt{s}}
  \leqslant
  M_m
  \leqslant
  C_M\,\frac{2^s}{\sqrt{s}}.
\]
Since \(s\geqslant(1+\varepsilon)\log_2 n\), this gives
\[
  M_m
  \geqslant
  c_M\,\frac{n^{1+\varepsilon}}{\sqrt{s}}.
\]

By the Varshamov--Gilbert bound \citep[Lemma~4.7]{massart2007}, there
exist universal constants \(c_1>0\) and \(c_2>0\), and a subset
\(\Omega\subset\{0,1\}^{\Lambda_m}\), such that
\begin{equation}\label{eq:VG-middle}
  \log |\Omega|\geqslant c_1M_m,
  \qquad
  d_H(\omega,\omega')\geqslant c_2M_m
  \quad
  \text{for all distinct }\omega,\omega'\in\Omega,
\end{equation}
where \(d_H\) denotes the Hamming distance on
\(\{0,1\}^{\Lambda_m}\).

For a scale \(\beta\in(0,1]\), to be fixed below, define
\begin{equation}\label{eq:middle-family}
  g_\omega(z)=
  \begin{cases}
  0, & |z|<m,\\
  \beta\omega_z, & z\in\Lambda_m,\\
  \beta, & |z|>m.
  \end{cases}
\end{equation}

\begin{lemma}[Middle-layer family]\label{lem:middle-family}
For \(\beta\in(0,1]\), the functions \(g_\omega\) are coordinatewise
nondecreasing and take values in \([0,1]\).  Moreover, there exists a universal constant
\(A_0<\infty\) such that
\[
  I^{(2)}(g_\omega)
  \leqslant
  A_0\beta^2\sqrt{s}
  \qquad
  \text{for all }\omega\in\Omega.
\]
\end{lemma}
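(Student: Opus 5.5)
Monotonicity and range come straight from the definition \eqref{eq:middle-family}, and the energy bound from the derivative representation \eqref{eq:fourier-sobolev-derivative}.

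\emph{Monotonicity and range.} Fix \(z\) and a coordinate \(i\), and compare \(z^{i\to0}\) with \(z^{i\to1}\). The two points have weights \(w\) and \(w+1\), so they lie in consecutive layers. If \(w+1<m\), both values are \(0\). If \(w>m\), both values are \(\beta\). If \(w=m-1\), the lower point has value \(0\) and the upper point lies in \(\Lambda_m\), with value \(\beta\omega_{z^{i\to1}}\geqslant0\). If \(w=m\), the lower point lies in \(\Lambda_m\), with value \(\beta\omega\leqslant\beta\), and the upper point has value \(\beta\). Hence \(\Delta_i g_\omega\geqslant0\) in every case. Since \(\omega_z\in\{0,1\}\) and \(\beta\in(0,1]\), all values lie in \(\{0,\beta\}\subset[0,1]\).

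\emph{Energy.} The case analysis shows that \(\Delta_i g_\omega(z)\) can be nonzero only if the edge joins layers \(m-1\) and \(m\), or layers \(m\) and \(m+1\). On any such edge the increment is at most \(\beta\) in absolute value. By \eqref{eq:fourier-sobolev-derivative}, and because \(\Delta_i g_\omega\) does not depend on \(z_i\),
\[
I^{(2)}(g_\omega)=\sum_{i=1}^s\E\bigl[(\Delta_i g_\omega)^2\bigr]
\leqslant \beta^2\,2^{-(s-1)}\,\#\{\text{edges between layers } m-1,m \text{ or } m,m+1\}.
\]
Each point of \(\Lambda_m\) has \(m\) edges going down and \(s-m\) edges going up, so the number of such edges is at most \(s\,M_m\). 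This gives
\[
I^{(2)}(g_\omega)\leqslant 2\beta^2\,s\,M_m\,2^{-s}\leqslant 2C_M\beta^2\sqrt{s},
\]
where the last step uses the Stirling bound \(M_m\leqslant C_M2^s/\sqrt{s}\) recorded above. So the lemma holds with \(A_0=2C_M\).

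The one point needing care is the edge normalization: each edge is counted once for its coordinate \(i\), and \(\E[(\Delta_i g)^2]\) equals \(2^{-(s-1)}\) times the sum of squared increments over the \(2^{s-1}\) edges in direction \(i\), which gives the factor \(2^{-(s-1)}\). The constant is universal because \(C_M\) is.
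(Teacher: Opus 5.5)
Your proof is correct and follows essentially the same route as the paper: range and monotonicity come from inspecting how $g_\omega$ changes between consecutive layers, and the energy bound comes from the derivative representation, using $|\Delta_i g_\omega|\leqslant\beta$ on edges touching $\Lambda_m$ and $\Delta_i g_\omega=0$ elsewhere. Your direct edge count $sM_m$ is the same quantity the paper writes as $\sum_{i}\bigl(\binom{s-1}{m-1}+\binom{s-1}{m}\bigr)$ via $K_i\sim\mathrm{Bin}(s-1,1/2)$, and it yields the same constant $A_0=2C_M$.
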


\begin{proof}
The range constraint is immediate from the definition
\eqref{eq:middle-family}.  We first check monotonicity.  Let
\(z,z'\in\Hs\) be such that \(z\leqslant z'\) coordinatewise.  Then
\(|z|\leqslant |z'|\).  If \(|z|<m\), then
\(g_\omega(z)=0\leqslant g_\omega(z')\).  If \(|z|>m\), then necessarily
\(|z'|>m\), and hence \(g_\omega(z)=g_\omega(z')=\beta\).  It remains to
consider \(|z|=m\).  If \(|z'|=m\), then \(z\leqslant z'\) and
\(|z|=|z'|\) imply \(z=z'\).  If \(|z'|>m\), then
\(g_\omega(z)\in\{0,\beta\}\) and \(g_\omega(z')=\beta\).  Thus
\(g_\omega\) is coordinatewise nondecreasing.

We turn to the Fourier--Sobolev energy bound.  Fix \(i\in[s]\) and
\(z\in\Hs\), and put
\[
  k=\sum_{j\ne i}z_j.
\]
Then \(|z^{i\to0}|=k\) and \(|z^{i\to1}|=k+1\).  The increment
\(\Delta_i g_\omega(z)\) can be nonzero only when these two endpoints meet
the middle layer, that is, only when \(k=m-1\) or \(k=m\).  If \(k=m-1\),
then \(z^{i\to1}\in\Lambda_m\) and
\[
  \Delta_i g_\omega(z)=\beta\omega_{z^{i\to1}}.
\]
If \(k=m\), then \(z^{i\to0}\in\Lambda_m\) and
\[
  \Delta_i g_\omega(z)=\beta(1-\omega_{z^{i\to0}}).
\]
In all other cases, \(\Delta_i g_\omega(z)=0\).  Thus
\[
  |\Delta_i g_\omega(z)|\leqslant\beta
  \qquad
  \text{for all }z\in\Hs.
\]

Let \(X\) be uniformly distributed on \(\Hs\), and set
\[
  K_i=\sum_{j\ne i}X_j.
\]
Then \(K_i\sim\mathrm{Bin}(s-1,1/2)\), and the preceding pointwise
description gives
\[
  (\Delta_i g_\omega(X))^2
  \leqslant
  \beta^2\one_{\{K_i=m-1\}}
  +
  \beta^2\one_{\{K_i=m\}}.
\]
Therefore
\[
  \E[(\Delta_i g_\omega(X))^2]
  \leqslant
  \beta^2
  \frac{\binom{s-1}{m-1}+\binom{s-1}{m}}{2^{s-1}}
  =
  \beta^2\frac{\binom{s}{m}}{2^{s-1}}.
\]
Using the derivative representation \eqref{eq:fourier-sobolev-derivative},
summing over \(i=1,\ldots,s\), and using
\[
  \frac{\binom{s}{m}}{2^s}\leqslant \frac{C_M}{\sqrt{s}},
\]
we obtain
\[
  I^{(2)}(g_\omega)
  =
  \sum_{i=1}^s \E[(\Delta_i g_\omega(X))^2]
  \leqslant
  s\,\beta^2\frac{\binom{s}{m}}{2^{s-1}}
  \leqslant
  A_0\beta^2\sqrt{s}.
\]
\end{proof}

Choose
\begin{equation}\label{eq:beta}
  \beta^2=\frac{B}{A_0\sqrt{s}}.
\end{equation}
Since \(B>0\) is fixed and \(s\to\infty\), we have \(\beta\leqslant1\) for
all sufficiently large \(n\).  Lemma~\ref{lem:middle-family} therefore
gives \(g_\omega\in\cG_{s,B}\) for all \(\omega\in\Omega\).

For distinct \(\omega,\omega'\in\Omega\), the functions \(g_\omega\) and
\(g_{\omega'}\) differ only on \(\Lambda_m\).  Hence
\[
  \norm{g_\omega-g_{\omega'}}_2^2
  =
  \frac{\beta^2 d_H(\omega,\omega')}{2^s}.
\]
Using \eqref{eq:VG-middle}, the lower bound on \(M_m\), and
\eqref{eq:beta}, we get
\[
  \norm{g_\omega-g_{\omega'}}_2^2
  \geqslant
  c_2\beta^2\frac{M_m}{2^s}
  \geqslant
  \frac{c_2c_M}{A_0}\,\frac{B}{s}.
\]
Thus the family \(\{g_\omega:\omega\in\Omega\}\) is separated in
\(L^2(\mu)\) at squared distance of order \(B/s\).  

Choose \(\omega_0\in\Omega\) and set \(\Omega_1=\Omega\setminus\{\omega_0\}\);
for all sufficiently large \(n\), \(|\Omega_1|\geqslant2\).  We verify the
two hypotheses of Lemma~\ref{lem:fano} for the family
\(\{g_{\omega_0}\}\cup\{g_\omega:\omega\in\Omega_1\}\), with metric
\(\rho(g,g')=\norm{g-g'}_2\).

\emph{Separation.}  The lower bound just obtained gives, for all distinct
\(\omega,\omega'\in\Omega_1\),
\[
  \norm{g_\omega-g_{\omega'}}_2\geqslant2\delta,
  \qquad
  \delta^2=\frac{c_2c_M}{4A_0}\,\frac{B}{s}.
\]

\emph{Kullback--Leibler control.}  Restrict to the Gaussian noise
submodel with variance \(\sigma^2\).  Using
\(d_H(\omega,\omega')\leqslant M_m\), the upper bound on \(M_m\), and
\eqref{eq:beta},
\[
  \KL(P_\omega,P_{\omega'})
  =
  \frac{n}{2\sigma^2}\norm{g_\omega-g_{\omega'}}_2^2
  \leqslant
  \frac{C_M}{2A_0\sigma^2}\,\frac{nB}{s}
\]
for all \(\omega,\omega'\in\Omega\).  On the other hand, by
\eqref{eq:VG-middle} and the lower bound on \(M_m\),
\[
  \log|\Omega_1|\sim\log|\Omega|\geqslant c_1M_m
  \geqslant c_1c_M\,\frac{n^{1+\varepsilon}}{\sqrt{s}}.
\]
Combining the last two displays,
\[
 \frac{\sup_{\omega,\omega'\in\Omega,\,\omega\ne\omega'}\KL(P_\omega,P_{\omega'})}{\log|\Omega_1|}
  \leqslant
  \frac{C_M}{2A_0\sigma^2c_1c_M}\,\frac{B}{n^\varepsilon\sqrt{s}}
  \to0,
\]
so that, for all sufficiently large \(n\), this ratio is at most
\(1/4\).  Thus the sufficient condition \eqref{eq:fano-pair} holds with
\(\alpha=1/4\), \(M=|\Omega_1|\), and index set \(\Theta=\Omega\).

Applying Lemma~\ref{lem:fano} to the family
\[
  \{g_{\omega_0}\}\cup\{g_\omega:\omega\in\Omega_1\}
\]
with the metric \(\rho(g,g')=\norm{g-g'}_2\), we obtain
\[
  \inf_{\wh g}\sup_{g\in\cG_{s,B}}R(\wh g,g)
  \geqslant
  \delta^2
  \left(
    1-\frac14-\frac{\log 2}{\log|\Omega_1|}
  \right).
\]
Since \(\log|\Omega_1|\to\infty\), the factor in parentheses is bounded
from below by a positive constant for all sufficiently large \(n\).
Therefore
\[
  \inf_{\wh g}\sup_{g\in\cG_{s,B}}R(\wh g,g)
  \geqslant
  c\,\frac{B}{s}.
\]
This completes the proof of Theorem~\ref{thm:known-support}.

\subsection{Proof of Theorem~\ref{thm:intrinsic-monotone}}

The lower bound follows
from Theorem~\ref{thm:known-support}, since the unknown-support class
\(\cM_{d,B,s}\) contains the subclass of functions depending on any fixed
set of \(s\) coordinates. Indeed, fix the first \(s\) coordinates and identify each
\(g\in\cG_{s,B}\) with its cylindrical extension
\[
  f(x)=g(x_1,\ldots,x_s),
  \qquad x\in\Hd .
\]
Then \(f\in\cM_{d,B,s}\) and \(I^{(2)}(f)=I^{(2)}(g)\). The remaining coordinates are independent of the response throughout
this submodel.  We make the reduction to the known-support experiment
explicit.  Write \(X=(Z,U)\), where \(Z\in\Hs\) collects the first
\(s\) coordinates and \(U\in\{0,1\}^{d-s}\) the remaining ones, so
that \(Z\) and \(U\) are independent and uniform, and
\(Y=g(Z)+\varepsilon\).  First, a statistician observing the
known-support sample \((Z_j,Y_j)_{j=1}^n\) may draw auxiliary variables
\(U_1,\ldots,U_n\), i.i.d.\ uniform on \(\{0,1\}^{d-s}\) and
independent of everything else; the pairs
\(((Z_j,U_j),Y_j)_{j=1}^n\) then have exactly the distribution of the
\(d\)-dimensional sample.  Second, given any estimator \(\wh f\) in
the \(d\)-dimensional experiment, define
\[
  \wh g(z)
  =
  \E\bigl[\wh f(z,U)\bigm|(Z_j,Y_j)_{j=1}^n\bigr],
  \qquad z\in\Hs,
\]
where the conditional expectation averages both over \(U\), uniform on
\(\{0,1\}^{d-s}\) and independent of the rest, and over the auxiliary
variables \(U_1,\ldots,U_n\); in particular, \(\wh g\) is an
estimator in the known-support experiment.  Since \(f(z,u)=g(z)\) does
not depend on \(u\), Jensen's inequality for conditional expectations
gives, pointwise in \(z\),
\[
  \bigl(\wh g(z)-g(z)\bigr)^2
  \leqslant
  \E\Bigl[\bigl(\wh f(z,U)-f(z,U)\bigr)^2\Bigm|(Z_j,Y_j)_{j=1}^n\Bigr],
\]
and taking expectations and integrating over \(z\in\Hs\) yields
\[
  \E\norm{\wh g-g}_{L^2(\Hs)}^2
  \leqslant
  \E\norm{\wh f-f}_{L^2(\Hd)}^2 .
\]
Thus every estimator in the \(d\)-dimensional experiment induces an
estimator in the known-support experiment with no larger risk, so the
minimax risk over the cylindrical submodel is bounded from below by the
minimax risk of the known-support experiment on \(\Hs\), and the lower
bound follows from Theorem~\ref{thm:known-support}.

We therefore focus on the upper bound.  The following proposition gives the
required support-selection version of the projection estimator.

\begin{proposition}[Projection with unknown support]\label{prop:unknown-support-projection}
Let \(n\geqslant2\), \(1\leqslant s\leqslant d\), and \(0\leqslant D\leqslant s\). There
exists an estimator \(\wh f_{s,D}\) such that, for every
\(f\in\cM_{d,B,s}\),
\[
  \E\norm{\wh f_{s,D}-f}_2^2
  \leqslant
  C\left\{
  \frac{B}{D+1}
  +
  \frac{(\sigma^2+1)\bigl(N(s,D)+\log\binom ds\bigr)}{n}
  \right\},
\]
where \(C<\infty\) is a universal constant.
\end{proposition}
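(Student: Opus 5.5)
The plan is a hold-out model selection over the \(\binom ds\) candidate supports. I would split the sample into two halves of sizes \(n_1=\lfloor n/2\rfloor\) and \(n_2=n-n_1\). For each \(T\subseteq[d]\) with \(|T|=s\), the first half gives the clipped empirical Fourier projection
\[
  \wh g_T=\Pi_{[0,1]}\Bigl(\sum_{S\subseteq T,\,|S|\leqslant D}\wt f(S)\chi_S\Bigr).
\]
The second half selects
\[
  \wh T\in\arg\min_T\frac1{n_2}\sum_{j\in\text{half 2}}\bigl(Y_j-\wh g_T(X_j)\bigr)^2,
\]
and I set \(\wh f_{s,D}=\wh g_{\wh T}\).

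\emph{Step 1: oracle candidate.} Fix \(T^\star\supseteq\operatorname{supp}(f)\) with \(|T^\star|=s\). Then \(f\) depends only on the coordinates in \(T^\star\), so \(\wh f(S)=0\) unless \(S\subseteq T^\star\). The first-half estimator \(\wh g_{T^\star}\) is therefore exactly the projection estimator of Proposition~\ref{prop:projection}, run on the coordinates \(T^\star\). The relevant quantities are:
\begin{itemize}
  \item the empirical coefficients \(\wt f(S)\), \(S\subseteq T^\star\), which are unbiased with variance at most \((\sigma^2+1)/n_1\);
  \item the number of retained coefficients, which is \(N(s,D)\);
  \item the bias, controlled by the tail bound \eqref{eq:tail-bound} applied to \(f\) viewed on \(\{0,1\}^{T^\star}\), where \(I^{(2)}\) is unchanged.
\end{itemize}
Together with the clipping contraction, this gives
\[
  \E\norm{\wh g_{T^\star}-f}_2^2\leqslant \frac{B}{4(D+1)}+\frac{2(\sigma^2+1)N(s,D)}{n}.
\]

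\emph{Step 2: selection oracle inequality.} Conditionally on the first half, the candidates are fixed functions with values in \([0,1]\), and there are \(M=\binom ds\) of them. I would invoke the standard hold-out least-squares aggregation bound for bounded regression with sub-Gaussian noise. For each fixed candidate \(g\), write the excess empirical loss as
\[
  \bigl(Y-g(X)\bigr)^2-\bigl(Y-f(X)\bigr)^2=(g-f)^2(X)-2\varepsilon\,(g-f)(X).
\]
Since \(|g-f|\leqslant1\), this has mean \(\norm{g-f}_2^2\), and it satisfies a Bernstein-type moment condition with variance proxy \(\lesssim(\sigma^2+1)\norm{g-f}_2^2\). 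A Bernstein/Chernoff bound on each candidate, a union bound over the \(M\) candidates, and integration of the tail then yield
\[
  \E\norm{\wh g_{\wh T}-f}_2^2\leqslant C\min_T\E\norm{\wh g_T-f}_2^2+C\,\frac{(\sigma^2+1)\log M}{n}.
\]
This is Lemma-type material (in the spirit of Lecué or Audibert); the rigorous version uses the "\(\norm{g-f}^2-2\times\)empirical" trick so that the \(\log M/n\) term is fast rather than of order \(\sqrt{\log M/n}\).

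\emph{Step 3.} Plugging the oracle bound for \(T^\star\) from Step 1 into the oracle inequality of Step 2 gives the claimed bound, with a universal constant \(C\).

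I expect the main obstacle to be Step 2, specifically obtaining the fast rate \(\log\binom ds/n\) with only sub-Gaussian, unbounded noise. The cross term \(\varepsilon\,(g-f)(X)\) has moment generating function bounded by \(\exp\bigl(\sigma^2t^2(g-f)^2(X)/2\bigr)\). Conditioning on \(X\) and using boundedness of \(g-f\) gives the Bernstein condition with constants involving \(\sigma^2+1\). I would carry out the Chernoff computation explicitly at a fixed small \(t\asymp1/(\sigma^2+1)\), rather than citing a black box.
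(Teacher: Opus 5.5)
Your proposal is correct and matches the paper's proof. You split the sample, form clipped degree-$D$ projections on each of the $\binom ds$ candidate supports, bound the oracle candidate $T^\star\supseteq\operatorname{supp}(f)$ via Proposition~\ref{prop:projection}, and select by hold-out empirical squared loss using Bernstein's inequality, a union bound and tail integration, which is the content of Lemma~\ref{lem:holdout}.

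The one variation is in how that lemma is proved. You center the excess loss at the true regression function $f$: well-specification gives the Bernstein condition for every candidate, and you then compare $\widehat T$ to $T^\star$ through $\norm{g-f}_2^2$ versus twice the empirical excess loss. The paper instead centers at the best candidate $h^\star$ and restricts to candidates with $\ell(h)\geqslant 2\ell^\star$. Both routes are valid.

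One small fix: $\lfloor n/2\rfloor\geqslant n/3$ is all that holds in general, so the factor $2$ in your Step~1 bound should be $3$ for odd $n$. This is harmless, since $C$ is a universal constant.
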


\begin{proof}
For each candidate support \(J\subset[d]\) with \(|J|=s\), compute on the
first half of the sample the empirical Fourier projection of degree \(D\) on
\[
  \mathcal V(J,D)=\operatorname{span}\{\chi_S:S\subseteq J,
  |S|\leqslant D\},
\]
and clip it to \([0,1]\).  Denote the resulting candidate by \(\wh f_J\).

Let \(J_f=\operatorname{supp}(f)\).  Since \(f\in\cM_{d,B,s}\), one has
\(|J_f|\leqslant s\).  Hence \(J_f\) is contained in at least one candidate
support \(J\subset[d]\) with \(|J|=s\).  For any such \(J\), the function
\(f\) depends only on the coordinates in \(J\), and its Fourier--Walsh
coefficients vanish outside subsets of \(J\).  Therefore the estimator \(\wh f_J\) reproduces the known-support
projection, and the bias--variance decomposition in the proof of
Proposition~\ref{prop:projection} applies verbatim with the index set
\(\{S\subseteq J:|S|\leqslant D\}\), of cardinality \(N(s,D)\), in
place of \(\{S:|S|\leqslant D\}\): the squared bias equals
\(\sum_{S\subseteq J,\,|S|>D}\wh f(S)^2\), which is at most
\(B/(4(D+1))\) by \eqref{eq:tail-bound}, since
\(I^{(2)}(f)\leqslant B\); each retained empirical coefficient,
computed from the first half of the sample, of size
\(\lfloor n/2\rfloor\geqslant n/3\), has variance at most
\((\sigma^2+1)/\lfloor n/2\rfloor\); and clipping to \([0,1]\) can
only decrease the error.  We obtain
\[
  \E\norm{\wh f_J-f}_2^2
  \leqslant
  C\left(
  \frac{B}{D+1}
  +
  \frac{(\sigma^2+1)N(s,D)}{n}
  \right).
\]

The second half of the sample is used only to select among the
\(\binom ds\) clipped candidates by empirical squared loss.  Let
\(\widehat J\) be a minimizer of the empirical squared loss over the second
half of the sample among the candidates \(\{\wh f_J:|J|=s\}\), and set
\[
  \wh f_{s,D}=\wh f_{\widehat J}.
\]
Conditionally on the first half of the sample, the family
\(\{\wh f_J:|J|=s\}\) is fixed and has cardinality at most
\(M=\binom ds\).  The second half of the sample, of size
\(m=n-\lfloor n/2\rfloor\geqslant n/2\), is independent of this family.
Since the candidates and \(f\) take values in \([0,1]\) and the noise is
sub-Gaussian, Lemma~\ref{lem:holdout}, applied conditionally on the
first half of the sample, yields, after taking expectation with respect
to the first half and using
\(\E\inf_{|J|=s}\leqslant\inf_{|J|=s}\E\) together with
\(m\geqslant n/2\),
\[
  \E\norm{\wh f_{s,D}-f}_2^2
  \leqslant
  2\inf_{|J|=s}\E\norm{\wh f_J-f}_2^2
  +
  C\,\frac{(\sigma^2+1)\log\bigl(e\binom ds\bigr)}{n}.
\]
Finally,
\[
  \log\Bigl(e\binom ds\Bigr)
  =
  1+\log\binom ds,
\]
and the additive constant \(1\) is absorbed into the term
\((\sigma^2+1)N(s,D)/n\) of the stated bound, since
\(N(s,D) \geqslant \binom s 0 \geqslant 1\).  This absorption also covers the degenerate case
\(s=d\), where \(\binom ds=1\), the logarithm \(\log\binom ds\)
vanishes, and the selection step involves a single candidate.  Combining
the resulting inequality with the bound for any candidate support
containing \(J_f\) proves the result.
\end{proof}

\begin{remark}
Oracle inequalities of this type are classical; see, for instance,
\citet{wegkamp2003} for closely related statements.  We include a short 
self-contained proof in Section~\ref{ssec:holdout}, adapted to the 
sub-Gaussian noise framework of the present paper.
\end{remark}

We now complete the proof of Theorem~\ref{thm:intrinsic-monotone}.  Choose
\(\eta>0\) small enough and set \(D=\lfloor\eta s\rfloor\).  More precisely, decrease \(\eta\) if necessary so that
\[
  C_0H_2(\eta)<1-\gamma
\]
for some \(\gamma\in(0,1)\), where \(H_2\) is the binary entropy function.  As in
the proof of Lemma~\ref{lem:known-dstar}, the entropy bound gives
\[
  N(s,D)
  \leqslant
  2^{sH_2(\eta)}
  \leqslant
  n^{C_0H_2(\eta)}
  \leqslant
  n^{1-\gamma}.
\]
Since \(s\asymp\log n\), it follows that
\[
  \frac{N(s,D)}{n}
  \leqslant
  n^{-\gamma}
  =
  o\left(\frac1s\right).
\]
Applying Proposition~\ref{prop:unknown-support-projection}, we obtain, for
a universal constant \(C'<\infty\),
\[
  R_n^*(\cM_{d,B,s})
  \leqslant
  C'\left\{
  \frac{B}{D+1}
  +
  \frac{(\sigma^2+1)N(s,D)}{n}
  +
  \frac{(\sigma^2+1)\log\binom ds}{n}
  \right\}.
\]
Here \(D+1\asymp s\), so the first term is of order \(B/s\); and by the
previous display, for all sufficiently large \(n\),
\[
  \frac{(\sigma^2+1)N(s,D)}{n}
  \leqslant
  \frac{B}{s}.
\]
Note that the threshold may depend on \(B\), \(\sigma^2\), \(\varepsilon\), and
\(C_0\).  Therefore
\[
  R_n^*(\cM_{d,B,s})
  \leqslant
  C\left(
  \frac{B}{s}
  +
  \frac{\log\binom ds}{n}
  \right),
\]
where \(C<\infty\) depends only on \(\sigma^2\), \(\varepsilon\), and
\(C_0\).  Together with the lower bound obtained from
Theorem~\ref{thm:known-support}, this proves the first assertion of the
theorem. The final assertion follows immediately from the condition
\[
  \log\binom ds
  \leqslant
  A_1\frac{nB}{s}.
\]
The constants in the resulting equivalence \(R_n^*(\cM_{d,B,s})\asymp
B/s\) depend on \(\sigma^2\), \(\varepsilon\), \(C_0\), and additionally
on \(A_1\).  This completes the proof of
Theorem~\ref{thm:intrinsic-monotone}.

\subsection{Entropy bounds for binomial coefficients}

\begin{lemma}[Entropy bounds for binomial coefficients]\label{lem:binomial-entropy}
Let
\[
  H_2(u)=-u\log_2 u-(1-u)\log_2(1-u),
  \qquad u\in(0,1).
\]

For every integer \(s\geqslant1\) and every
\(\eta\in(0,1/2]\),
\[
  \sum_{k=0}^{\lfloor\eta s\rfloor}\binom{s}{k}
  \leqslant
  2^{sH_2(\eta)}.
\]

Moreover, for every fixed \(\rho\in(0,1)\), there exist constants
\(c_\rho>0\) and \(s_\rho\geqslant1\) such that, for every
\(s\geqslant s_\rho\),
\[
  \binom{s}{\lfloor\rho s\rfloor}
  \geqslant
  c_\rho s^{-1/2}2^{sH_2(\rho)}.
\]
\end{lemma}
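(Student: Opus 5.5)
The upper bound is the classical Chernoff-type argument, and the lower bound comes from Stirling's formula. For the first claim, fix \(\eta\in(0,1/2]\) and write \(k_0=\lfloor\eta s\rfloor\). Since \(\eta\le 1/2\), we have \(\eta/(1-\eta)\le 1\). Hence, for every \(k\le k_0\le\eta s\),
\[
  \Bigl(\frac{\eta}{1-\eta}\Bigr)^{k}\geqslant\Bigl(\frac{\eta}{1-\eta}\Bigr)^{\eta s}.
\]
Expanding \(1=(\eta+(1-\eta))^s\) and keeping only the terms with \(k\le k_0\) gives
\[
  1\geqslant\sum_{k=0}^{k_0}\binom sk\eta^k(1-\eta)^{s-k}
  =(1-\eta)^s\sum_{k=0}^{k_0}\binom sk\Bigl(\frac{\eta}{1-\eta}\Bigr)^{k}
  \geqslant\eta^{\eta s}(1-\eta)^{(1-\eta)s}\sum_{k=0}^{k_0}\binom sk .
\]
Since \(\eta^{\eta s}(1-\eta)^{(1-\eta)s}=2^{-sH_2(\eta)}\), this is exactly the claim. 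The cases \(k_0=0\) and \(\eta=1/2\) need no separate treatment.

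For the second claim, fix \(\rho\in(0,1)\) and set \(k=\lfloor\rho s\rfloor\). Take \(s\) large enough that \(1\le k\le s-1\). We then use the two-sided Stirling bounds \(\sqrt{2\pi n}(n/e)^n\le n!\le e\sqrt n(n/e)^n\), valid for \(n\ge1\). These give
\[
  \binom sk\geqslant c\sqrt{\frac{s}{k(s-k)}}\;\frac{s^s}{k^k(s-k)^{s-k}}
  =c\sqrt{\frac{s}{k(s-k)}}\;2^{sH_2(k/s)}
\]
for a universal constant \(c\). Because \(k/s\to\rho\in(0,1)\), the prefactor is at least \(c'_\rho s^{-1/2}\).

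The remaining step is to replace \(H_2(k/s)\) by \(H_2(\rho)\). This is the only delicate point, since the error in the exponent must be \(O(1)\) and not merely \(o(s)\). Write \(k/s=\rho-\theta/s\) with \(\theta\in[0,1)\). For \(s\) large, both \(k/s\) and \(\rho\) lie in a compact subinterval \([\rho/2,(1+\rho)/2]\) of \((0,1)\). On that interval \(|H_2'|\) is bounded by some \(L_\rho\). The mean value theorem then gives
\[
  s\,|H_2(k/s)-H_2(\rho)|\le L_\rho\,\theta\le L_\rho ,
\]
so \(2^{sH_2(k/s)}\ge 2^{-L_\rho}2^{sH_2(\rho)}\).

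Combining the two displays yields the bound with \(c_\rho=c'_\rho 2^{-L_\rho}\). The threshold \(s_\rho\) is chosen so that \(k\ge1\), \(s-k\ge1\), and \(k/s\ge\rho/2\); all of these hold once \(s\ge 2/\rho\) and \(s\ge 2/(1-\rho)\), say.
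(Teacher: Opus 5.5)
Your proposal is correct and follows essentially the same route as the paper. For the upper bound, both use the binomial-theorem (Chernoff-type) argument based on the monotonicity of $t\mapsto\eta^t(1-\eta)^{s-t}$; for the lower bound, both use two-sided Stirling bounds followed by an $O(1/s)$ Lipschitz estimate on $H_2$ near $\rho$, and your mean-value argument on $[\rho/2,(1+\rho)/2]$ with an explicit threshold $s_\rho$ just makes precise the paper's appeal to continuous differentiability of $H_2$ near $\rho$.
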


\begin{proof}
We first prove the upper bound.  Fix \(\eta\in(0,1/2]\).  By the binomial
theorem,
\[
  1
  =
  \sum_{k=0}^s
  \binom{s}{k}\eta^k(1-\eta)^{s-k}.
\]
Since \(\eta/(1-\eta)\leqslant1\), the function
\[
  t\longmapsto \eta^t(1-\eta)^{s-t}
\]
is nonincreasing.  Hence, for every
\(0\leqslant k\leqslant\lfloor\eta s\rfloor\),
\[
  \eta^k(1-\eta)^{s-k}
  \geqslant
  \eta^{\eta s}(1-\eta)^{(1-\eta)s}.
\]
Therefore
\[
  1
  \geqslant
  \eta^{\eta s}(1-\eta)^{(1-\eta)s}
  \sum_{k=0}^{\lfloor\eta s\rfloor}\binom{s}{k},
\]
and thus
\[
  \sum_{k=0}^{\lfloor\eta s\rfloor}\binom{s}{k}
  \leqslant
  \eta^{-\eta s}(1-\eta)^{-(1-\eta)s}
  =
  2^{sH_2(\eta)}.
\]

We now prove the lower bound.  Let
\[
  m=\lfloor\rho s\rfloor,
  \qquad
  p_s=\frac{m}{s}.
\]
Then \(p_s\to\rho\).  In particular, both \(m\) and \(s-m\) tend to
infinity.

By Stirling's formula, there exists a universal constant \(c_S>0\)
such that, for every integer \(k\geqslant1\),
\[
  c_S\sqrt k\left(\frac ke\right)^k
  \leqslant
  k!
  \leqslant
  c_S^{-1}\sqrt k\left(\frac ke\right)^k.
\]
Applying these bounds to \(s!\), \(m!\), and \((s-m)!\), and noting
that the powers of \(e\) cancel, we obtain
\[
  \binom{s}{m}
  \geqslant
  c_S^3
  \sqrt{\frac{s}{m(s-m)}}
  \frac{s^s}{m^m(s-m)^{s-m}} .
\]
Since \(m=sp_s\),
\[
  \binom{s}{m}
  \geqslant
  \frac{c_S^3}{\sqrt{s\,p_s(1-p_s)}}
  2^{sH_2(p_s)}.
\]
For all sufficiently large \(s\), one has \(p_s\in(0,1)\), and hence
\[
  \frac{1}{\sqrt{p_s(1-p_s)}}\geqslant2.
\]
Moreover,
\[
  |p_s-\rho|\leqslant\frac1s.
\]
Since \(H_2\) is continuously differentiable in a neighborhood of
\(\rho\), there exists \(C_\rho<\infty\) such that
\[
  |H_2(p_s)-H_2(\rho)|
  \leqslant
  \frac{C_\rho}{s}
\]
for all sufficiently large \(s\).  Therefore
\[
  2^{sH_2(p_s)}
  \geqslant
  2^{-C_\rho}2^{sH_2(\rho)}.
\]
Combining the preceding inequalities yields
\[
  \binom{s}{\lfloor\rho s\rfloor}
  \geqslant
  c_\rho s^{-1/2}2^{sH_2(\rho)}
\]
for all sufficiently large \(s\).
\end{proof}

\subsection{A hold-out inequality for finite classes}\label{ssec:holdout}

\begin{lemma}[Finite hold-out selection]\label{lem:holdout}
Let \((X_j,Y_j)_{j=1}^m\) be an i.i.d.\ validation sample satisfying
\[
  Y_j=f(X_j)+\varepsilon_j,
\]
where \(0\leqslant f\leqslant 1\), and where the noise variables are
centered, independent of the design, and sub-Gaussian with parameter
\(\sigma^2\), as in Section~\ref{sec:model}.  Let
\(\mathcal H=\{h_1,\ldots,h_M\}\) be a finite family of fixed functions
such that \(0\leqslant h\leqslant 1\) for all \(h\in\mathcal H\).

For \(h\in\mathcal H\), define
\[
  \ell(h)=\E\big[(f(X)-h(X))^2\big],
  \qquad
  \widehat \ell(h)=\frac1m\sum_{j=1}^m (Y_j-h(X_j))^2,
\]
where \(X\) is distributed as the design variables.  Let
\[
  \widehat h\in\arg\min_{h\in\mathcal H}\widehat \ell(h).
\]
Then there exists a universal constant \(C<\infty\) such that
\[
  \E \ell(\widehat h)
  \leqslant
  2\inf_{h\in\mathcal H}\ell(h)
  +
  C\,\frac{(\sigma^2+1)\log(eM)}{m}.
\]
\end{lemma}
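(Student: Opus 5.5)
The proof reduces the selection problem to a uniform deviation bound for centered excess losses, in the style of classical hold-out arguments. For \(h\in\mathcal H\), write the excess empirical risk relative to the regression function. Since \(Y_j-h(X_j)=(f-h)(X_j)+\varepsilon_j\),
\[
  \widehat\ell(h)-\frac1m\sum_{j=1}^m\varepsilon_j^2
  =
  \frac1m\sum_{j=1}^m Z_j(h),
  \qquad
  Z_j(h)=(f-h)^2(X_j)+2\varepsilon_j(f-h)(X_j).
\]
The subtracted term does not depend on \(h\), so \(\widehat h\) also minimizes \(P_m Z(h)=m^{-1}\sum_j Z_j(h)\). We have \(\E Z_j(h)=\ell(h)\), and with \(g=f-h\in[-1,1]\),
\[
  \E\bigl[(Z_j(h))^2\bigr]\lesssim(1+\sigma^2)\,\ell(h).
\]
This variance--mean relation comes from \(g^4\leqslant g^2\) and \(\E[\varepsilon^2g^2]\leqslant\sigma^2\ell(h)\). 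It is what produces a fast \(1/m\) rate rather than \(1/\sqrt m\).

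Let \(h^\circ\) minimize \(\ell\) over \(\mathcal H\). From \(P_mZ(\widehat h)\leqslant P_mZ(h^\circ)\) we get
\[
  \ell(\widehat h)\leqslant \ell(h^\circ)+\bigl[\ell(\widehat h)-P_mZ(\widehat h)\bigr]+\bigl[P_mZ(h^\circ)-\ell(h^\circ)\bigr].
\]
The third term has zero mean. For the second, the plan is to prove a one-sided Bernstein-type exponential bound for each fixed \(h\): for \(\lambda\) in a range \(0<\lambda\leqslant c/(1+\sigma^2)\),
\[
  \E\exp\Bigl(\lambda m\bigl[\tfrac12\ell(h)-P_mZ(h)\bigr]\Bigr)\leqslant 1 .
\]
To obtain it, I would condition on \(X_j\). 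The sub-Gaussian assumption gives
\[
  \E\bigl[e^{-2\lambda\varepsilon_jg(X_j)}\bigm|X_j\bigr]\leqslant e^{2\lambda^2\sigma^2g(X_j)^2},
\]
so it remains to bound \(\E\exp\bigl(-\lambda(1-2\lambda\sigma^2)g^2(X)+\lambda\ell/2\bigr)\). Using \(e^{-u}\leqslant1-u+u^2/2\) for \(u\geqslant0\), together with \(g^2\leqslant1\), this is at most \(1\) once \(\lambda\lesssim 1/(1+\sigma^2)\).

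A union bound over the \(M\) candidates, via \(\E\max\leqslant \lambda^{-1}\log\sum\E e^{\lambda(\cdot)}\), then gives
\[
  \E\max_h m\bigl[\tfrac12\ell(h)-P_mZ(h)\bigr]\leqslant \lambda^{-1}\log M.
\]
Hence
\[
  \E\bigl[\tfrac12\ell(\widehat h)\bigr]\leqslant \ell(h^\circ)+C(1+\sigma^2)\frac{\log M}{m}.
\]
Multiplying by \(2\) yields the claimed inequality, with \(\log M\) replaced by \(\log(eM)\) to cover \(M=1\).

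The main obstacle is the exponential moment step: the one-sided lower-tail control of \(P_mZ(h)\) against half its mean, uniformly with constants of order \(1+\sigma^2\). The noise cross term \(2\varepsilon g\) is unbounded, so a plain Bernstein inequality with a bounded range does not apply directly. The way around this is to integrate out the noise first using the sub-Gaussian moment generating function, which reduces everything to the bounded variable \(g^2(X)\in[0,1]\). If the constants there turn out awkward, a fallback is Bernstein's inequality in its moment form, using \(\E|\varepsilon|^k\leqslant C^k k^{k/2}\sigma^k\).
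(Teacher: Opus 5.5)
Your proof is correct, and it takes a genuinely different route from the paper's.

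The paper compares each candidate with the best candidate \(h^\star\) through \(Z_h=(y-h)^2-(y-h^\star)^2\). This forces it to do several things:
\begin{itemize}
\item restrict to candidates with \(\ell(h)\geqslant2\ell^\star\), so that \(\mu_h\geqslant\ell(h)/2\) and \(\E\psi_h^2\leqslant6\mu_h\);
\item verify a Bernstein moment condition for \(Z_h-\mu_h\), which needs the bound \(\E|\varepsilon|^q\leqslant q!(\sqrt2\sigma)^q\) and some bookkeeping (the constant \(K=192\));
\item apply a one-sided Bernstein inequality in moment form to the event \(\{P_mZ_h\leqslant0\}\);
\item take a union bound over candidates with \(\ell(h)>2\ell^\star+r\) and integrate the resulting tail.
\end{itemize}

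You instead center at the regression function by subtracting \(\frac1m\sum_j\varepsilon_j^2\). Then \(\E Z(h)=\ell(h)\) for every candidate, with no case distinction. The noise is integrated out exactly through the sub-Gaussian moment generating function conditionally on \(X\), which leaves only the bounded variable \(g^2\in[0,1]\). The factor \(2\) comes from the offset \(\tfrac12\ell(h)\) in the exponent, not from the restriction \(\ell(h)\geqslant2\ell^\star\).

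Your exponential step does close. With \(\lambda'=\lambda(1-2\lambda\sigma^2)\), each factor is at most \(\exp\{\lambda\ell/2-\lambda'\ell(1-\lambda'/2)\}\leqslant1\) as soon as \(\lambda\leqslant\min(1/2,1/(8\sigma^2))\). That minimum is at least \(1/(8(1+\sigma^2))\), so the log-sum-exp bound gives \(C=16\).

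What your route buys is that it is shorter and self-contained: no moment bounds on the noise, no external concentration inequality, no tail integration. Your stated fallback is essentially the paper's proof.

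What the paper's route buys is a deviation inequality, \(\mathbb P\{\ell(\widehat h)>2\ell^\star+K_0(\sigma^2+1)(\log(eM)+u)/m\}\leqslant e^{-u}\). In your decomposition the term \(P_mZ(h^\circ)-\ell(h^\circ)\) disappears only through its zero mean. A high-probability version would therefore need an extra upper-tail bound on that term, at the cost of a slightly larger leading constant. Only the expectation bound is used in Proposition~\ref{prop:unknown-support-projection}, so nothing is lost there.
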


\begin{proof}
Let
\[
  h^\star\in\arg\min_{h\in\mathcal H}\ell(h),
  \qquad
  \ell^\star=\ell(h^\star).
\]
For \(h\in\mathcal H\), define the excess validation loss
\[
  Z_h(x,y)
  =
  (y-h(x))^2-(y-h^\star(x))^2 ,
\]
and write \(Z_h=Z_h(X,Y)\) for the corresponding random variable, where
\((X,Y)\) is a generic observation. Writing \(Y=f(X)+\varepsilon\) and \(\psi_h=h-h^\star\) (a function on
the design space, so that \(\psi_h(X)\) denotes its value at \(X\)), a direct
expansion gives
\[
  Z_h
  =
  (f-h)^2-(f-h^\star)^2
  -2\varepsilon\,\psi_h(X).
\]
Since \(\E[\varepsilon\mid X]=0\),
\[
  \mu_h
  =
  \E Z_h
  =
  \ell(h)-\ell^\star .
\]
Note that \(|\psi_h|\leqslant1\) and that, since
\(0\leqslant f,h\leqslant 1\), one has \(\ell(h)\leqslant1\) for every
\(h\in\mathcal H\); in particular, \(\mu_h\leqslant1\).

\medskip
\emph{Step 1.}
Fix \(h\in\mathcal H\) such that \(\ell(h)\geqslant2\ell^\star\) and
\(\ell(h)>0\).  Then
\[
  \mu_h=\ell(h)-\ell^\star\geqslant \frac12 \ell(h)>0,
  \qquad\text{so that}\qquad
  0<\mu_h\leqslant1 .
\]
Moreover, since \(\psi_h=(h-f)+(f-h^\star)\),
\[
  \E \psi_h(X)^2
  \leqslant
  2\ell(h)+2\ell^\star
  \leqslant
  3\ell(h)
  \leqslant
  6\mu_h .
\]

We claim that \(Z_h-\mu_h\) satisfies the Bernstein moment condition:
for every integer \(q\geqslant2\),
\begin{equation}\label{eq:bernstein-condition}
  \E|Z_h-\mu_h|^q
  \leqslant
  \frac{q!}{2}\,v\,b^{q-2},
  \qquad
  v=K(\sigma^2+1)\mu_h,
  \qquad
  b=K(\sigma^2+1),
\end{equation}
with \(K=192\).  To see this, write \(A=(f-h)^2-(f-h^\star)^2\), a
function on the design space with \(A=A(X)\) understood, so that
\(Z_h-\mu_h=(A-\mu_h)-2\varepsilon\psi_h(X)\), and, factoring the
difference of squares,
\[
  |A|
  =
  |\psi_h|\,|2f-h-h^\star|
  \leqslant
  2|\psi_h| .
\]
Fix an integer \(q\geqslant2\).  Since \(|\psi_h|\leqslant1\), one has
\(|\psi_h|^q\leqslant\psi_h^2\) and \(|A|^q\leqslant2^q\psi_h^2\), while
\(0<\mu_h\leqslant1\) gives \(\mu_h^q\leqslant\mu_h\).  Hence, by
convexity of \(x\mapsto x^q\),
\[
  \E|A-\mu_h|^q
  \leqslant
  2^{q-1}\bigl(\E|A|^q+\mu_h^q\bigr)
  \leqslant
  2^{q-1}\bigl(2^q\cdot6\mu_h+\mu_h\bigr)
  \leqslant
  7\cdot2^{2q-1}\mu_h .
\]
On the other hand, integrating the Chernoff tail bound
\({\mathbb P}\{|\varepsilon|>t\}\leqslant2e^{-t^2/(2\sigma^2)}\) yields
\(\E|\varepsilon|^q\leqslant q!\,(\sqrt2\sigma)^q\), so that, by
independence of \(\varepsilon\) and \(X\),
\[
  \E\big|2\varepsilon\psi_h(X)\big|^q
  =
  2^q\,\E|\varepsilon|^q\,\E|\psi_h(X)|^q
  \leqslant
  2^q\,q!\,(\sqrt2\sigma)^q\cdot6\mu_h .
\]
Combining the two displays by convexity again, and using
\(q!/2\geqslant1\) for the first term,
\begin{align*}
  \E|Z_h-\mu_h|^q
  &\leqslant
  2^{q-1}\,\E|A-\mu_h|^q
  +
  2^{q-1}\,\E\big|2\varepsilon\psi_h(X)\big|^q\\
  &\leqslant
  7\cdot2^{3q-2}\mu_h
  +
  3\,q!\,(4\sqrt2\sigma)^q\mu_h\\
  &=
  112\cdot8^{q-2}\mu_h
  +
  \frac{q!}{2}\cdot192\sigma^2(4\sqrt2\sigma)^{q-2}\mu_h\\
  &\leqslant
  \frac{q!}{2}\,
  \bigl(112+192\sigma^2\bigr)
  \max\bigl(8,4\sqrt2\sigma\bigr)^{q-2}\mu_h .
\end{align*}
Since \(112+192\sigma^2\leqslant192(\sigma^2+1)\) and
\(\max(8,4\sqrt2\sigma)\leqslant8(\sigma^2+1)\leqslant192(\sigma^2+1)\),
this proves \eqref{eq:bernstein-condition} with \(K=192\).

We now invoke Bernstein's inequality in the one-sided form of
\citet[Theorem~2.10]{boucheron2013}: if \(W_1,\ldots,W_m\) are
independent real-valued random variables such that
\(\sum_{j=1}^m\E[W_j^2]\leqslant V\) and
\(\sum_{j=1}^m\E[(W_j)_+^q]\leqslant\frac{q!}{2}Vc^{q-2}\) for every
integer \(q\geqslant3\), then, for every \(t>0\),
\[
  {\mathbb P}\left\{
  \sum_{j=1}^m \bigl(W_j-\E W_j\bigr)\geqslant \sqrt{2Vt}+ct
  \right\}
  \leqslant
  e^{-t}.
\]
We apply this to the centered variables
\(W_j=\mu_h-Z_h(X_j,Y_j)\).  By \eqref{eq:bernstein-condition} and
\((x)_+\leqslant|x|\), the hypotheses hold with \(V=mv\) and \(c=b\).
Take
\[
  t_0=\frac{m\mu_h^2}{2(v+b\mu_h)},
  \qquad\text{so that}\qquad
  \sqrt{2mv\,t_0}+b\,t_0
  =
  m\mu_h
  \left(
  \sqrt{1-x}+\frac{x}{2}
  \right)
  \leqslant
  m\mu_h,
\]
where \(x=b\mu_h/(v+b\mu_h)\in(0,1)\) and we used
\(\sqrt{1-x}\leqslant1-x/2\).  Therefore
\[
  {\mathbb P}\left\{
    \frac1m\sum_{j=1}^m Z_h(X_j,Y_j)\leqslant0
  \right\}
  =
  {\mathbb P}\left\{
    \sum_{j=1}^m W_j\geqslant m\mu_h
  \right\}
  \leqslant
  e^{-t_0}
  =
  \exp\left(
    -\frac{m\mu_h^2}{2(v+b\mu_h)}
  \right).
\]
Since \(v+b\mu_h\leqslant 2K(\sigma^2+1)\mu_h\), and since
\(\mu_h\geqslant \ell(h)/2\), we conclude that
\begin{equation}\label{eq:fixed-h-bound}
  {\mathbb P}\left\{
    \frac1m\sum_{j=1}^m Z_h(X_j,Y_j)\leqslant0
  \right\}
  \leqslant
  \exp\left(
    -c\,m\,\frac{\ell(h)}{\sigma^2+1}
  \right)
\end{equation}
for a universal constant \(c>0\).

\medskip
\emph{Step 2.}
Fix \(r>0\).  If
\[
  \ell(\widehat h)>2\ell^\star+r,
\]
then \(\widehat h\) belongs to the set
\(\{h\in\mathcal H:\ell(h)>2\ell^\star+r\}\), and, by empirical
minimality, \(\widehat \ell(\widehat h)\leqslant \widehat
\ell(h^\star)\).  Since
\[
  \widehat \ell(h)-\widehat \ell(h^\star)
  =
  \frac1m\sum_{j=1}^m Z_h(X_j,Y_j),
\]
it follows that
\[
  \{\ell(\widehat h)>2\ell^\star+r\}
  \subseteq
  \bigcup_{\substack{h\in\mathcal H\\ \ell(h)>2\ell^\star+r}}
  \left\{
  \frac1m\sum_{j=1}^m Z_h(X_j,Y_j)\leqslant0
  \right\}.
\]
Every \(h\) in this union satisfies \(\ell(h)>2\ell^\star+r\); in
particular, \(\ell(h)\geqslant2\ell^\star\) and
\(\ell(h)\geqslant r>0\), so that the bound \eqref{eq:fixed-h-bound}
applies.  A union bound over \(\mathcal H\) therefore gives
\[
  {\mathbb P}\{\ell(\widehat h)>2\ell^\star+r\}
  \leqslant
  M\exp\left(
    -c\,m\,\frac{r}{\sigma^2+1}
  \right).
\]

\medskip
\emph{Step 3.}
For \(u\geqslant0\), take
\[
  r
  =
  K_0\frac{(\sigma^2+1)(\log(eM)+u)}{m}
\]
with \(K_0=2/c\).  Then
\[
  M\exp\left(-c\,m\,\frac{r}{\sigma^2+1}\right)
  =
  M(eM)^{-2}e^{-2u}
  =
  \frac{1}{e^2M}\,e^{-2u}
  \leqslant
  e^{-u},
\]
so that
\[
  {\mathbb P}\left\{
    \ell(\widehat h)
    >
    2\ell^\star
    +
    K_0\frac{(\sigma^2+1)(\log(eM)+u)}{m}
  \right\}
  \leqslant
  e^{-u},
  \qquad u\geqslant0.
\]
Consequently, the random variable
\[
  T
  =
  \frac{m}{K_0(\sigma^2+1)}
  \left(
    \ell(\widehat h)-2\ell^\star-K_0\frac{(\sigma^2+1)\log(eM)}{m}
  \right)_+
\]
satisfies \({\mathbb P}\{T>u\}\leqslant e^{-u}\) for all \(u\geqslant0\); the
positive part makes \(T\) nonnegative, so that
\(\E T=\int_0^\infty{\mathbb P}\{T>u\}\,\mathrm du\leqslant1\).  Rearranging,
and using \(\log(eM)\geqslant1\),
\[
  \E \ell(\widehat h)
  \leqslant
  2\ell^\star
  +
  K_0\frac{(\sigma^2+1)(\log(eM)+1)}{m}
  \leqslant
  2\ell^\star
  +
  2K_0\frac{(\sigma^2+1)\log(eM)}{m}.
\]
Since \(\ell^\star=\inf_{h\in\mathcal H}\ell(h)\), the result follows.
\end{proof}


\bibliographystyle{apalike}
\bibliography{references}

\end{document}